\long\def \forget#1{}
\theoremstyle{plain}
\newtheorem{Lemma}{Lemma}[section]
\newtheorem{Theorem}[Lemma]{Theorem}
\theoremstyle{definition}
\newcommand{\DS}{\displaystyle}
\let\setminus\smallsetminus
\newcommand{\es}{\enspace}
\newcommand{\dpl}{{\mathchoice{\mbox{\rm (\hspace{-0.15em}(}}
{\mbox{\rm (\hspace{-0.15em}(}}
{\mbox{\scriptsize\rm (\hspace{-0.15em}(}}
{\mbox{\tiny\rm (\hspace{-0.15em}(}}}}
\newcommand{\dpr}{{\mathchoice{\mbox{\rm )\hspace{-0.15em})}}
{\mbox{\rm )\hspace{-0.15em})}}
{\mbox{\scriptsize\rm )\hspace{-0.15em})}}
{\mbox{\tiny\rm )\hspace{-0.15em})}}}}
\newcommand{\invlim}[1][]{\ifthenelse{\equal{#1}{}}
{\DS \lim_{\longleftarrow}}
{\DS \lim_{\underset{#1}{\longleftarrow}}}
}
\newcommand{\dirlim}[1][]{\ifthenelse{\equal{#1}{}}
{\DS \lim_{\longrightarrow}}
{\DS \lim_{\underset{#1}{\longrightarrow}}}
}
\newcommand{\BOne} {{\mathchoice{\hbox{\rm1\kern-2.7pt l\kern.9pt}}
{\hbox{\rm1\kern-2.7pt l\kern.9pt}}
{\hbox{\scriptsize\rm1\kern-2.3pt l\kern.4pt}}
{\hbox{\scriptsize\rm1\kern-2.4pt l\kern.5pt}}}}
\newcommand{\BC}{{\mathbb{C}}}
\newcommand{\BF}{{\mathbb{F}}}
\newcommand{\BN}{{\mathbb{N}}}
\newcommand{\BT}{{\mathbb{T}}}
\newcommand{\sK}{{\mathscr{K}}}
\newcommand{\FD}{{\mathfrak{D}}}
\newcommand{\Fa}{{\mathfrak{a}}}
\DeclareMathOperator{\Norm}{Norm}
\DeclareMathOperator{\Tr}{Tr}
\DeclareMathOperator{\vol}{vol}
\begin{document}
\author{ Stephan Baier, Rajneesh Kumar Singh}
\title{The large sieve inequality with square moduli for quadratic extensions of function fields}
\maketitle
\begin{abstract}
In this paper, we establish a version of the large sieve inequality with square moduli for imaginary quadratic extensions 
of rational function fields of odd characteristics.  
\end{abstract}
\tableofcontents
\bigskip

\section{Function fields}
In this section, we introduce some basic notations and provide facts on rational function fields which will be used throughout this paper. 
We follow \cite{Hsu} but in addition introduce the notation $k$ for $\mathbb{F}_q(t)$ and $A$ for $\mathbb{F}_q[t]$ which will be useful when
considering quadratic extensions. 

By $\BF_q$ we denote a field with $q$ elements of characteristic $p$. Let $\Tr : \BF_q \to \BF_p$ be the trace map and 
$\BF_q(t)$ the function field in one variable over $\BF_q$.
Let $k_\infty$ be the completion of $k= \BF_q(t)$ at $\infty$ (i.e. $k_{\infty}=\BF_q\dpl 1/t\dpr$). The absolute value 
$|\cdot |_\infty$ on $k_\infty$ is defined by 
\begin{align*}
\mathrel \bigg | \sum_{i= -\infty}^n a_i t^i\bigg| = q^n, \es \text{if} \es 0 \neq a_n \in \BF_q.
\end{align*}
The non-trivial additive character $E: \BF_q \to \BC^\times$ is defined by 
\begin{align*}
E(x) = \exp \bigg\{\frac{2\pi i}{p} \Tr(x)\bigg\},
\end{align*}
where the map $e: k_\infty \to \BC^\times$ is defined by 
\begin{align*}
e\bigg(\sum_{i= -\infty}^n a_i t^i\bigg) = E(a_{-1}).
\end{align*}
This map $e$ is also a non-trivial additive character of $k_\infty$.

Given $f = (f_1, f_2,\cdots, f_n)\in k^n_\infty$, we define the additive character $\Psi_f:  k^n_\infty \to \BC^\times$ as 
\begin{align*}
\Psi_f((g_1,g_2,\cdots,g_n))&= e(f_1g_1+f_2g_2+\cdots,f_ng_n)\\
&=\prod_{i=1}^ne(f_ig_i)
\end{align*}
for any $(g_1,g_2,\cdots,g_n)\in k^n_\infty$.
Suppose $f = (f_1, f_2,\cdots, f_n)\in k^n_\infty$. We define a norm $|\cdot |_\infty$ on $k^n_\infty$ by 
\begin{align*}
|f |_\infty = \sup\{|f_1 |_\infty,\ |f_2 |_\infty, \cdots,|f_n |_\infty\}.
\end{align*}
Given any $N\in \mathbb{R}$, the $N$-ball $B_n(f,N)$ is defined by 
\begin{align*}
B_n(f,N) = \{g\in  k^n_\infty \ : \ |g-f|_\infty \leq q^N  \}.
\end{align*}
We view $A^n : = \BF_q[t]^n\subset k^n_\infty$ as a lattice of rank $n$ over $A = \BF_q[t]$ 
and define the $n$-torus to be $\BT_n = k^n_\infty/A^n $. The induced metric on $\BT_n$ is given by 
\begin{align*}
\|h\|_\infty = \inf_{h'\in h +A^n}|h'|_\infty. 
\end{align*}
Note that 
$\BT_n$ is a compact Hausdorff space and for all $h\in \BT_n,\ \|h\|_\infty\leq 1/q$.

\section{Imaginary quadratic extensions}
In this section, we introduce imaginary quadratic extensions of $\BF_q(t)$ and continue with notations and basic facts. For more details on algebraic extensions of function fields, see \cite{Ros}.

First, let $K$ be a general finite separable extension of $k = \BF_q(t)$ of degree $n$. We denote the integral closure of $A=\BF_q[t]$ in $K$ by $\mathbb{A}$ and 
let $(\theta_1,...,\theta_n)$ be an integral basis of $K$, so that every integer $\xi\in \mathbb{A}$ is representable uniquely as 
$$
\xi=f_1\theta_1+...+f_n\theta_n,
$$
where $f_1,...,f_n$ belong to $A$. For any integral ideal $\mathfrak{a}$ of $K$, let $\sigma(\xi)$ be an additive character modulo 
$\mathfrak{a}$. Such a character is called proper if it is not an additive character modulo an ideal $\mathfrak{b}$ which divides 
$\mathfrak{a}$ properly.   
If $\xi = \xi'\bmod{\Fa}$ and $\eta \in \mathbb{A}/ \FD_K \Fa$, where $\FD_K$  is the different of the field $K$, 
then 
$$
\xi \eta - \xi'\eta \in \FD_K^{-1}= \{a\in K \mid \mathbf{Tr}(a)\in \BF_q[t]\},
$$
so that
$$
\mathbf{Tr}(\xi\eta) = \mathbf{Tr}(\xi'\eta) \bmod{A}
$$
and therefore 
$$
e(\mathbf{Tr}(\xi\eta)) = e (\mathbf{Tr}(\xi'\eta)).
$$
Here $\mathbf{Tr}$ denotes the trace map $\mathbf{Tr}: K \to k$.

Now we restrict ourselves to imaginary quadratic extensions. We further assume that $q$ is odd. 
Let $\alpha\in A$ be a square-free polynomial which is of odd degree or whose leading coefficient is not a square in $\mathbb{F}_q$ and set
$$
K := k\left(\sqrt{\alpha}\right).
$$
Then we say that the field extension $K:k$ is imaginary quadratic. Let $\mathbb{A}$ be the 
integral closure of $A$ in $K$. We know that $\mathbb{A}$ is equal to 
$A + A\sqrt{\alpha}$ and $\FD_K$ equals $\left(2\sqrt{\alpha}\right)$ (see \cite[page 248]{Ros}). In general, $\mathbb{A}$ is not necessarily a  principal ideal domain, but in the sequel, 
we will restrict ourselves to principal ideals in $\mathbb{A}$. 
The proper additive characters for the ideal $\Fa=(f)$ take the form 
$$
\sigma(\xi)=e\left(\mathbf{Tr}\left(\frac{\xi r}{2\sqrt{\alpha}f}\right)\right) \ \text{for some}\ r \ \text{with} \ (r,f)=1.
$$
To see this, one notes that since $(r,f)=1$, these are indeed proper characters for $(f)$ and, moreover, there are no other proper characters since the number of
them equals, by duality, the order of the group of units in the quotient ring $\mathbb{A}/(f)$. 
Norm and trace of an element $f = a + b\sqrt{\alpha}\in K$ are 
given as 
$$
\mbox{Norm}(f) =  a^2- b^2 \alpha \quad \mbox{and} \quad \mathbf{Tr}(f)=2a.
$$

We fix a place in $K$ above $\infty$ and denote it by $\infty$ as well. 
Let $K_\infty$ be the completion of $K$ at this place. The absolute value on $k_{\infty}$ extends uniquely to an absolute value on $K_{\infty}$ which we also denote by $|.|_{\infty}$. 
Henceforth, we write
$$
\mathbf{N}(f):=|f|_{\infty}^2
$$
for $f\in \mathbb{K}_{\infty}$.
We note that
$$
\mathbf{N}(f):= q^{\deg(a^2-b^2\alpha)} \quad \mbox{if } f=a+b\sqrt{\alpha}\in \mathbb{A}.
$$
For any given  integer $N\in \mathbb{R}$, we write  
\begin{align*}
\mathbf{B}(f,N) : = \{g\in  K_\infty \ : \ \mathbf{N}{(g-f)} \leq q^N  \}=\{g\in  K_\infty \ : \ |g-f|_{\infty} \leq q^{N/2}  \},
\end{align*}
which may be viewed as the $N/2$-ball with center $f$ in $K_{\infty}$. 
We view $\mathbb{A} \subset K_\infty$ as a lattice 
and define the corresponding torus to be $\mathbf{T} = K_\infty/\mathbb{A}$. The induced metric on $\mathbf{T}$ is given by 
\begin{equation*} 
\|h\|_{\mathbf{T},\infty} := \inf_{h'\sim h}|h'|_\infty.
\end{equation*}
We set
\begin{equation} \label{newtorus}
\mathbf{N}_{\mathbf{T}}(h):=\|h\|_{\mathbf{T},\infty}^2.
\end{equation}

\section{Main results}
The goal of this paper is to establish large sieve inequalities for $K=k(\sqrt{\alpha})$, extending earlier work for rational function fields 
$k$ in \cite{BSi3} and \cite{Hsu}. We first prove the following analogue of the classical large sieve inequality 
for imaginary quadratic extensions of function fields. 

\begin{Theorem}\label{fullmodqe} Let $Q, N \in \mathbb{N}$ and $(a_g)_{g \in \mathbb{A}}$ be any sequence of complex numbers. Then 
\begin{equation*}
\begin{split}
& \sum\limits_{\substack{f\in \mathbb{A}\setminus\{0\}\\ \mathbf{N}(f)= q^Q}} \ \sum\limits_{\substack{r \bmod{f}\\ (r,f)=1}} 
\left|\sum \limits_{g \in \mathbf{B}(0, N)\cap \mathbb{A}}  a_g \cdot e\left(\mathbf{Tr}\left(\frac{gr}{2\sqrt{\alpha}f}\right)\right)\right|^2 \\
\ll & \left(q^{2Q}+q^N\right) \sum \limits_{g \in \mathbf{B}(0, N)\cap \mathbb{A}}   |a_g|^2,
\end{split}
\end{equation*}
where the implied constant depends only on $q$ and $\deg(\alpha)$.
\end{Theorem}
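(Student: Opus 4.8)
The plan is to prove this by the duality principle combined with the exact orthogonality of additive characters over balls in $K_\infty$, the latter playing the role that Farey spacing plays in the classical large sieve. Throughout write $d=\deg(\alpha)$ and $\xi_{f,r}=r/(2\sqrt{\alpha}f)$, so the inner character is $g\mapsto e(\mathbf{Tr}(g\xi_{f,r}))$. First I would pin down the summation set. Writing $g=g_1+g_2\sqrt{\alpha}$ with $g_1,g_2\in A$, the imaginary quadratic hypothesis forces $\deg(g_1^2-g_2^2\alpha)=\max(2\deg g_1,\,2\deg g_2+d)$: the two degrees $2\deg g_1$ and $2\deg g_2+d$ either have opposite parity (when $d$ is odd) or, if they tie, would require $\mathrm{lc}(\alpha)$ to be a square (when $d$ is even), which is excluded. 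Hence $\mathbf{B}(0,N)\cap\mathbb{A}$ is exactly the product box $\{\deg g_1\le\lfloor N/2\rfloor\}\times\{\deg g_2\le\lfloor (N-d)/2\rfloor\}$, and $\mathbf{Tr}(g\xi_{f,r})$ splits into two one-variable additive characters. Over such a box the character sum is exactly orthogonal: for any $\eta\in K_\infty$, the sum $\sum_{g\in \mathbf{B}(0,N)\cap\mathbb{A}}e(\mathbf{Tr}(g\eta))$ equals the full count ($\asymp q^N$) if $\eta$ annihilates the box and vanishes otherwise. This exactness is the function-field substitute for approximate orthogonality and is what keeps the argument clean.

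Next I would invoke the duality principle to replace the stated inequality by the dual bilinear form $\sum_{g}\bigl|\sum_{f,r}b_{f,r}\,e(\mathbf{Tr}(g\xi_{f,r}))\bigr|^2$. Expanding the square and summing over $g$ first, the exact orthogonality collapses the off-diagonal terms: only pairs $((f,r),(f',r'))$ for which $\xi_{f,r}-\xi_{f',r'}$ annihilates the box survive, each contributing $\asymp q^N$. A Schur--Gershgorin bound then yields an admissible constant $\Delta\ll q^N\cdot R$, where $R$ is the maximal number of frequencies $\xi_{f',r'}$ that are ``close'' to a fixed $\xi_{f,r}$ in the sense of jointly annihilating the box. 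Everything is thus reduced to the estimate $R\ll 1+q^{2Q-N}$.

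The crucial input is the spacing of the frequencies, measured in the correct torus $K_\infty/\mathfrak{D}_K^{-1}$ with $\mathfrak{D}_K^{-1}=\tfrac{1}{2\sqrt{\alpha}}\mathbb{A}$, since two frequencies define the same character exactly when their difference lies in $\mathfrak{D}_K^{-1}$. For distinct characters, writing $\xi_{f,r}-\xi_{f',r'}=(rf'-r'f)/(2\sqrt{\alpha}ff')$, the numerator $rf'-r'f$ is not divisible by $ff'$ in $\mathbb{A}$, so for every $h\in\mathbb{A}$ the element $rf'-r'f-ff'h$ is a nonzero element of $\mathbb{A}$ and therefore has $|\cdot|_\infty\ge 1$. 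Dividing by $|2\sqrt{\alpha}ff'|_\infty=q^{Q+d/2}$ shows that distinct frequencies are at distance $\ge q^{-Q-d/2}$, the exact analogue of the $Q^{-2}$-spacing of Farey fractions. On the other hand, unwinding the annihilation condition shows that ``close'' frequencies lie within $\ll q^{-N/2}$ of one another in this torus. Since $K_\infty$ is two-dimensional over $k_\infty$, a ball of radius $q^{-j}$ has Haar measure $\asymp q^{-2j}$; the ultrametric packing bound (the $q^{-Q-d/2}$-balls about the frequencies are pairwise disjoint) then gives $R\ll 1+q^{-N}/q^{-2Q-d}\ll 1+q^{2Q-N}$, and $\Delta\ll q^N R\ll q^N+q^{2Q}$ as claimed.

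I expect the final counting step to be the main obstacle. One must measure the spacing in the different-scaled torus $K_\infty/\mathfrak{D}_K^{-1}$ rather than in $\mathbf{T}=K_\infty/\mathbb{A}$, and then carefully track both the anisotropy of the box (the two coordinates have different sizes, so the annihilation thresholds differ by a factor $q^{d/2}$) and the correct two-dimensional volume scaling of balls in $K_\infty$. Getting these normalizations right is precisely what converts the coarse spacing into the sharp bound $R\ll 1+q^{2Q-N}$, and hence into the asserted factor $q^{2Q}+q^N$ with implied constant depending only on $q$ and $\deg(\alpha)$.
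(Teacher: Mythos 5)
Your proposal is correct in substance and reaches the stated bound, but it takes a genuinely different route from the paper at the decisive step. Both arguments share the same skeleton: reduce the large sieve inequality to showing that at most $\ll 1+q^{2Q-N}$ frequencies $r/(2\sqrt{\alpha}f)$ can cluster within distance $\ll q^{-N/2}$ of any fixed one. For the reduction itself, the paper does not rerun duality: it rewrites everything in coordinates over $k_\infty^2$ and invokes its earlier two-dimensional large sieve (Theorem \ref{basiclargesieve}, quoted from \cite{BSi2}), which already outputs the factor $q^{N}\cdot\mathscr{K}$ with $\mathscr{K}$ the clustering count; your duality-plus-exact-orthogonality argument (exploiting that $\mathbf{B}(0,N)\cap\mathbb{A}$ is an additive group, so that Gram matrix entries are exactly $|B|$ or $0$) is a self-contained substitute for that citation. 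The genuine divergence is in bounding the count: the paper spends Sections 6--8 on this, reformulating the clustering condition as $\mathbf{D}(r_1f_2-r_2f_1)\le M$, applying Poisson summation over the lattice of multiples of $f_1$ (Theorem \ref{Thm3}, Lemma \ref{fourier}), and estimating the resulting linear exponential sums (Lemmas \ref{linexpsum} and \ref{Ubound}); you instead use the elementary Farey-type spacing $\ge q^{-Q-\ell/2}$, $\ell=\deg(\alpha)$, of distinct reduced fractions, together with ultrametric volume packing in $K_\infty/\mathfrak{D}_K^{-1}$. For Theorem \ref{fullmodqe} alone your route is shorter and more transparent; what the paper's heavier machinery buys is reusability, since the Poisson/exponential-sum apparatus is deliberately set up for an arbitrary set $\mathcal{S}$ of moduli because Theorem \ref{squaremodqe} requires it --- a pure packing argument is blind to the multiplicative structure of square moduli and would only reproduce the full-moduli bound with $Q$ doubled (main term $q^{4Q}$ instead of $q^{3Q}$). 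Two routine details remain to be recorded in your write-up, both absorbed into constants depending on $q$ and $\deg(\alpha)$: (i) when $q^{-N/2}$ exceeds the minimal distance $\asymp q^{-\ell/2}$ of the lattice $\mathfrak{D}_K^{-1}$ (essentially the range $N<\ell$, where the box constrains only one coordinate), the $q^{-N/2}$-neighborhood wraps around the torus, and one should instead pack the whole torus, which gives a count $\ll q^{2Q}\ll_{q,\ell}1+q^{2Q-N}$ in that range; (ii) distinct admissible pairs $(f,r)$ with associate moduli induce the same character, so the Schur row sums carry an extra multiplicity factor bounded in terms of the finite unit group of $\mathbb{A}$.
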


Then we restrict ourselves to square moduli. We imitate the procedure in \cite{BBa}, where a version of the large sieve with square moduli was obtained for
$\mathbb{Q}(i)$. This procedure itself is based on L. Zhao's work \cite{Zh1} on the large sieve with square moduli for $\mathbb{Q}$. 
Here, we shall establish the following.

\begin{Theorem}\label{squaremodqe} Let $Q, N \in \mathbb{N}$ and $(a_g)_{g \in \mathbb{A}}$ be any sequence of complex numbers. 
Assume that $\mbox{char}(K)=p>2$. Then 
\begin{equation} \label{in}
\begin{split}
& \sum\limits_{\substack{f\in \mathbb{A}\setminus\{0\}\\ \mathbf{N}(f)= q^Q}} \ \sum\limits_{\substack{r \bmod{f^2}\\ (r,f)=1}} 
\left|\sum \limits_{g \in \mathbf{B}(0, N)\cap \mathbb{A}}  a_g \cdot e\left(\mathbf{Tr}\left(\frac{gr}{2\sqrt{\alpha}f^2}\right)\right)\right|^2 
\\ \ll & \left(q^{3Q}+2^{(N-Q)/2}\left(q^{Q/2+N}+q^{2Q+N/2}\right)\right)
\sum \limits_{g \in \mathbf{B}(0, N)\cap \mathbb{A}}   |a_g|^2,
\end{split}
\end{equation}
where the implied constant depends only on $q$ and $\deg(\alpha)$.
\end{Theorem}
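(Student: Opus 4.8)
The plan is to transport the Zhao--Baier--Bansal scheme (\cite{Zh1}, \cite{BBa}) to $K=k(\sqrt\alpha)$ via the duality principle for the large sieve. Abbreviate the frequencies by $\xi_{f,r}=r/(2\sqrt\alpha f^2)$ and set $S(\xi)=\sum_{g\in\mathbf B(0,N)\cap\mathbb A}a_g\, e(\mathbf{Tr}(\xi g))$. By the duality principle, \eqref{in} holds with a given implied constant if and only if the dual inequality
\[
\sum_{g\in\mathbf B(0, N)\cap\mathbb A}\Big|\sum_{f,r}c_{f,r}\,e\big(\mathbf{Tr}(\xi_{f,r}g)\big)\Big|^2\ll \Delta\sum_{f,r}|c_{f,r}|^2
\]
holds with the same constant $\Delta$ for all complex $(c_{f,r})$, the sums over $(f,r)$ running over the ranges in \eqref{in}. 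I would work with this dual form, since expanding the square turns the problem into a counting problem for the moduli: the left-hand side equals $\sum_{f_1,r_1}\sum_{f_2,r_2}c_{f_1,r_1}\overline{c_{f_2,r_2}}\,W(\xi_{f_1,r_1}-\xi_{f_2,r_2})$, where $W(\beta)=\sum_{g\in\mathbf B(0,N)\cap\mathbb A}e(\mathbf{Tr}(\beta g))$.

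The next step is to evaluate $W(\beta)$ exactly, exploiting the rigidity of function fields. Because $K:k$ is imaginary quadratic, no cancellation occurs in the leading coefficients of $a+b\sqrt\alpha$ (in both the ramified and inert cases), so $\mathbf B(0,N)\cap\mathbb A$ is the product box $\{a\in A:\deg a\le N/2\}\times\{b\in A:\deg b\le (N-\deg\alpha)/2\}$ in the coordinates $g=a+b\sqrt\alpha$; moreover, writing $\beta=u+v\sqrt\alpha$, one has $\mathbf{Tr}(\beta g)=2(ua+v\alpha b)$, which splits additively in $a$ and $b$ (here $\mathrm{char}(K)>2$ is used). Orthogonality of the additive characters of $A=\mathbb F_q[t]$ then gives the clean dichotomy
\[
W(\beta)=\begin{cases}|\mathbf B(0,N)\cap\mathbb A|\asymp_{q,\deg\alpha}q^{N}, & \mathbf N_{\mathbf T}(2\sqrt\alpha\,\beta)\ll q^{-N},\\[1mm] 0,&\text{otherwise.}\end{cases}
\]
This $0/q^N$ alternative is the function-field substitute for the factor $\min(N,\|\cdot\|^{-1})$ in the classical argument and is what makes the present situation cleaner than over $\mathbb Q$. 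Since $W\ge 0$, applying $|c_1c_2|\le\tfrac12(|c_1|^2+|c_2|^2)$ and using the symmetry of the closeness relation reduces matters to
\[
\Delta\ll q^{N}\cdot\max_{f_1,r_1}C(f_1,r_1),\qquad C(f_1,r_1):=\#\Big\{(f_2,r_2):\big\|\tfrac{r_1}{f_1^2}-\tfrac{r_2}{f_2^2}\big\|_{\mathbf T,\infty}\ll q^{-N/2}\Big\},
\]
where $(f_2,r_2)$ ranges over the pairs admissible in \eqref{in}.

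The heart of the proof is the estimation of $C(f_1,r_1)$, which I would organize by whether $f_2$ is associate to $f_1$. In the diagonal case $f_2\sim f_1$ the condition becomes $\mathbf N\big((r_1-r_2)\bmod f_1^2\big)\ll q^{2Q-N}$, giving $\ll\max(1,q^{2Q-N})$ admissible $r_2$ and hence a contribution $\ll\max(q^{N},q^{2Q})$ to $\Delta$; both are absorbed by the right-hand side of \eqref{in}, since $q^{2Q}\le q^{3Q}$ and $q^{N}\le 2^{(N-Q)/2}q^{Q/2+N}$. When $f_2\not\sim f_1$, clearing denominators shows the closeness condition to be $\mathbf N\big((r_1f_2^2-r_2f_1^2)\bmod f_1^2f_2^2\big)\ll q^{4Q-N}$. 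For $N<2Q$ the small residues are plentiful: a direct count gives $\ll q^{2Q-N}$ admissible $r_2$ for each of the $\asymp q^{Q}$ moduli $f_2$, whence $C\ll q^{3Q-N}$ and $\Delta\ll q^{3Q}$, accounting for the first term. For $N\ge 2Q$ the Chinese Remainder Theorem (using $(f_1,f_2)=1$) pins $r_2$ down from a small resolvent $w$ with $r_1f_2^2\equiv w\pmod{f_1^2}$, leaving at most one admissible $r_2$ per $f_2$; thus $C(f_1,r_1)$ counts the moduli $f_2$ with $\mathbf N(f_2)=q^Q$ for which $f_2^2\equiv \overline{r_1}\,w\pmod{f_1^2}$ is solvable for some $w$ with $\mathbf N(w)\ll q^{4Q-N}$.

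The main obstacle is this last quadratic count: bounding how often the square $f_2^2$, with $f_2$ confined to a norm-$q^Q$ box, lands in the prescribed small set of residues modulo $f_1^2$. I would treat it by a dyadic decomposition over $\mathbf N(\gcd(f_1,f_2))$ and over $\mathbf N(w)$, reducing each piece to counting solutions of a quadratic congruence inside a box. The two natural ways of organizing this count --- fixing the resolvent and solving for $f_2$, versus fixing $f_2$ and solving for the resolvent --- produce respectively the terms $q^{2Q+N/2}$ and $q^{Q/2+N}$. In each piece the number of admissible square-roots (equivalently, of admissible factorizations) is a divisor-type quantity whose worst-case size at the relevant scale is $\ll 2^{(N-Q)/2}$; this exponential factor is the function-field counterpart of the divisor-function loss $(NQ)^{\varepsilon}$ in the treatments of $\mathbb Q$ and $\mathbb Q(i)$ in \cite{Zh1} and \cite{BBa}. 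Assembling the diagonal contribution, the $N<2Q$ count, and these two off-diagonal terms then yields \eqref{in}. I expect the delicate points to be the bookkeeping of the gcd cases and, above all, the verification that once $f_2$ is restricted to a box the square-root count is genuinely governed by a modulus of degree $\asymp (N-Q)/2$ rather than by the full modulus $f_1^2$; this is where both the gain over the trivial bound and the precise shape of the two middle terms are won.
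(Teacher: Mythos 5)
Your opening reductions are sound and in fact run parallel to the paper: the duality step, the exact $0/q^N$ evaluation of $W(\beta)$ (the paper gets the same dichotomy via Lemma \ref{fourier} and Poisson summation, and reaches the analogous spacing count $\sK$ through its two-dimensional large sieve, Theorem \ref{basiclargesieve}), the diagonal case, and the regime $N<2Q$ giving the term $q^{3Q}$ all check out. The problem is that the proof stops being a proof exactly where the theorem is won. For $N\ge 2Q$ you reduce everything to bounding $C(f_1,r_1)$, i.e.\ to counting $f_2$ with $\mathbf{N}(f_2)=q^Q$ such that $r_1f_2^2 \bmod f_1^2$ has a representative $w$ with $\mathbf{N}(w)\ll q^{4Q-N}$ --- and from there you only offer a plan (``I would treat it by a dyadic decomposition\dots'', ``I expect the delicate points to be\dots''). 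The two assertions carrying all the content --- that the two ways of organizing the count produce $q^{Q/2+N}$ and $q^{2Q+N/2}$, and that the relevant multiplicity is $\ll 2^{(N-Q)/2}$ --- are never derived, and as stated they are not correct: fixing $f_2$ and solving for the resolvent gives only the trivial $C\ll q^{Q}$ (hence $q^{Q+N}$, not $q^{Q/2+N}$), while fixing $w$ and counting square roots gives a factor $2^{\omega(f_1)}$, which is governed by the factorization of $f_1$ (it can be of size $2^{Q}$ when $f_1$ is a product of norm-$q$ primes) and has nothing to do with the scale $(N-Q)/2$.

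To see that the gap is real and not just a matter of writing out details, take $N=3Q$ and $q\ge 5$. The theorem requires the off-diagonal contribution to $\Delta$ to be $\ll 2^{(N-Q)/2}\bigl(q^{Q/2+N}+q^{2Q+N/2}\bigr)\asymp 2^{Q}q^{7Q/2}$, i.e.\ $C\ll 2^{Q}q^{Q/2}$; but both of your organizations give at best $C\ll\min\bigl(q^{Q},\,q^{4Q-N}2^{\omega(f_1)}\bigr)=q^{Q}$, which exceeds the target by the exponentially growing factor $(\sqrt{q}/2)^{Q}$. The point is that per-$w$ (or per-$f_2$) maximal counting cannot work: one must show that for \emph{most} small $w$ the congruence $f_2^2\equiv \overline{r_1}w \pmod{f_1^2}$ has no solution in the box, and dyadic decomposition creates no such averaging. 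The paper obtains exactly this averaging by staying in Fourier form: Poisson summation over the lattice of multiples of $f_1$ produces a dual variable $j$ with $\mathbf{N}(j)\le q^{N+1-2Q}$ and genuinely quadratic sums $\sum_{f}e\bigl(\mathbf{Tr}\bigl(jr_1f^2/(2f_1\sqrt{\alpha})\bigr)\bigr)$ as in \eqref{sKnewbound}; a Weyl shift \eqref{weyl} introduces $h$ with $\mathbf{N}(h)\le q^{Q}$; Cauchy--Schwarz is applied in $j$; the product $jh$ is collapsed into one variable using $\tau(jh)\ll_q 2^{\mathbf{D}(jh)}\le 2^{N+1-Q}$, whose square root is precisely the factor $2^{(N-Q)/2}$; and the resulting linear sums are counted by Lemma \ref{Ubound}, whose two terms $q^{2Q}+q^{N+1-Q}$ are the true source of $q^{2Q+N/2}$ and $q^{Q/2+N}$. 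Your outline contains no substitute for this mechanism, so the central estimate of the theorem remains unproved.
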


For comparison, the large sieve with square moduli for $\mathbb{Q}(i)$ established in \cite{Zh1} is the following inequality.

\begin{Theorem} \label{powermodZi} Let $Q,N\ge 1$ and $(a_n)_{n\in \mathbb{Z}[i]}$ be any sequence of complex numbers. Then
\begin{equation*}
\begin{split}
& \sum\limits_{\substack{q\in \mathbb{Z}[i]\setminus\{0\}\\ \mathbf{N}(q)\le Q}} \
\sum\limits_{\substack{r \bmod{q^2}\\ (r,q)=1}} \left|\sum\limits_{\substack{n\in \mathbb{Z}[i]\\ \mathbf{N}(n)\le N}}  
a_n \cdot e\left(\Re\left(\frac{nr}{q^2}\right)\right)\right|^2\\ \ll_{m,\varepsilon} & 
(QN)^{\varepsilon}\left(Q^{3}+NQ^{1/2}+N^{1/2}Q^{2}
\right)
\sum\limits_{\substack{n\in \mathbb{Z}[i]\\ \mathbf{N}(n)\le N}} |a_n|^2.
\end{split}
\end{equation*}
\end{Theorem}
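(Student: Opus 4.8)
The plan is to carry over Zhao's duality-and-spacing method for $\BQ$ \cite{Zh1}, in the form used for $\BQ(i)$ in \cite{BBa}, viewing $\BZ[i]$ as the self-dual lattice $\BZ^2\subset\BC=\BR^2$. Write $S(x)=\sum_{\mathbf{N}(n)\le N}a_n\,e(\Re(nx))$ for $x\in\BC$, so that the left-hand side is $\sum_{q,r}\lvert S(r/q^2)\rvert^2$, a Hermitian form in $(a_n)$. By the duality principle for the large sieve it is equivalent, and more convenient, to bound the dual form: I would show that
\begin{align*}
\sum_{\mathbf{N}(n)\le N}\left\lvert\,\sum_{\substack{q\\ \mathbf{N}(q)\le Q}}\ \sum_{\substack{r\bmod{q^2}\\ (r,q)=1}}b_{q,r}\,e\!\left(\Re\!\left(\frac{nr}{q^2}\right)\right)\right\rvert^2
&\ll_{\varepsilon}(QN)^{\varepsilon}\bigl(Q^3+NQ^{1/2}+N^{1/2}Q^2\bigr)\sum_{q,r}\lvert b_{q,r}\rvert^2
\end{align*}
holds for every choice of complex numbers $b_{q,r}$.

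Expanding the square and interchanging summations, the dual form becomes $\sum_{(q_1,r_1),(q_2,r_2)}b_{q_1,r_1}\overline{b_{q_2,r_2}}\,W$, where $W=\sum_{\mathbf{N}(n)\le N}e(\Re(n\Delta))$ and $\Delta=r_1/q_1^2-r_2/q_2^2$. To evaluate $W$ I would majorize $\mathbf{1}_{\mathbf{N}(\cdot)\le N}$ by a smooth, nonnegative $\Phi$ on $\BR^2$ whose Fourier transform $\widehat\Phi$ is nonnegative, concentrated in a ball of radius $\ll N^{-1/2}$, and satisfies $\widehat\Phi(0)\asymp N$. Poisson summation over the self-dual lattice $\BZ[i]$ then turns $\sum_n$ into $\sum_{\omega\in\BZ[i]}\widehat\Phi(\omega-\Delta)$, which is $\ll N$ when $\Delta$ lies within $\ll N^{-1/2}$ of a point of $\BZ[i]$ and is negligible otherwise. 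Hence, up to a rapidly decaying tail,
\begin{align*}
\sum_{\mathbf{N}(n)\le N}\left\lvert\cdots\right\rvert^2\ \ll\ N\sum_{\substack{(q_1,r_1),(q_2,r_2)\\ \lVert\Delta\rVert\,\ll\,N^{-1/2}}}\lvert b_{q_1,r_1}\rvert\,\lvert b_{q_2,r_2}\rvert ,
\end{align*}
where $\lVert\cdot\rVert$ denotes distance to the nearest point of $\BZ[i]$. By Schur's test (equivalently $2\lvert b_{q_1,r_1}b_{q_2,r_2}\rvert\le\lvert b_{q_1,r_1}\rvert^2+\lvert b_{q_2,r_2}\rvert^2$), this is at most $N\,M\sum_{q,r}\lvert b_{q,r}\rvert^2$, where $M$ is the maximal number of pairs $(q_2,r_2)$ lying close to a fixed $(q_1,r_1)$.

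Everything then reduces to the spacing/counting estimate
\begin{align*}
M:=\max_{(q_1,r_1)}\#\Bigl\{(q_2,r_2):\mathbf{N}(q_2)\le Q,\ (r_2,q_2)=1,\ \bigl\lVert\tfrac{r_1}{q_1^2}-\tfrac{r_2}{q_2^2}\bigr\rVert\le N^{-1/2}\Bigr\}\ \ll_{\varepsilon}\ (QN)^{\varepsilon}\Bigl(\tfrac{Q^3}{N}+Q^{1/2}+\tfrac{Q^2}{N^{1/2}}\Bigr).
\end{align*}
To prove this I would clear denominators: the closeness condition says that $h:=r_1q_2^2-r_2q_1^2-\omega q_1^2q_2^2\in\BZ[i]$ satisfies $\mathbf{N}(h)\le\mathbf{N}(q_1)^2\mathbf{N}(q_2)^2/N$, while $h\equiv r_1q_2^2\pmod{q_1^2}$; when $(q_1,q_2)=1$ each such $h$ pins down $r_2\bmod{q_2^2}$ uniquely. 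Counting Gaussian integers $h$ in this residue class modulo $q_1^2$ inside a disc of area $\asymp\mathbf{N}(q_1)^2\mathbf{N}(q_2)^2/N$ gives a main term $\asymp\mathbf{N}(q_2)^2/N$; summing over $\mathbf{N}(q_2)\le Q$ and using $\sum_{\mathbf{N}(q_2)\le Q}\mathbf{N}(q_2)^2\asymp Q^3$ produces $Q^3/N$, and after multiplication by $N$ this is the main term $Q^3$. The diagonal $(q_2,r_2)=(q_1,r_1)$ contributes only $N\ll NQ^{1/2}$ and is absorbed.

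The main obstacle is exactly the sharp evaluation of $M$: the lattice-point count in a fixed residue class has to be performed uniformly in $q_1,q_2,r_1$, and the delicate issue is to extract the clean error terms $Q^{1/2}$ and $Q^2/N^{1/2}$ rather than the wasteful $Q$ and $N^{1/2}Q^2$ losses that a crude boundary estimate produces. I would control these by splitting into dyadic ranges $\mathbf{N}(q_1)\asymp U$ and $\mathbf{N}(q_2)\asymp V$, retaining the genuine decay of $\widehat\Phi$ instead of a hard cutoff, and counting solutions of the Gaussian congruence $r_1q_2^2\equiv r_2q_1^2\pmod{q_1^2q_2^2}$ in a box by a determinant/divisor argument, precisely as in \cite{Zh1} and \cite{BBa}; the non-coprime pairs $(q_1,q_2)\neq 1$ and the boundary lattice points are then disposed of by routine bookkeeping, in which the $(QN)^{\varepsilon}$ divisor savings appear.
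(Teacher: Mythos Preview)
The paper contains no proof of Theorem~\ref{powermodZi}: it is merely quoted from the literature (attributed to \cite{Zh1}, though the $\BZ[i]$ statement is really from \cite{BBa}) as a point of comparison with the function-field Theorem~\ref{squaremodqe}. There is therefore no in-paper argument to set your attempt against.

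For what it is worth, the paper's proof of the \emph{analogous} function-field result, Theorem~\ref{squaremodqe}, does reach the same intermediate object you call $M$---the paper calls it $\sK$---but arrives there differently and, more importantly, estimates it differently. The reduction to $\sK$ is done not via duality and a smoothed Poisson on the $n$-sum, but by applying the basic two-dimensional large sieve (Theorem~\ref{basiclargesieve}) directly in \eqref{Tbound}. From that point the paper does \emph{not} count close pairs by clearing denominators and box-counting $h$ in a residue class, as you propose in step~4. Instead it Poisson-sums the congruence condition on $h$ (Section~7) to produce a sum over a dual variable $j$ of \emph{quadratic} exponential sums $\sum_f e(\mathbf{Tr}(jr_1 f^2/2f_1\sqrt{\alpha}))$, applies Cauchy--Schwarz in $j$, and then performs a \emph{Weyl shift} \eqref{weyl} to linearise the quadratic sum; the resulting linear sum is disposed of by Lemma~\ref{Ubound}. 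This is exactly Zhao's mechanism in \cite{Zh1} and \cite{BBa}. Your direct residue-class count will only deliver the main term $Q^3/N$ together with a crude error of size~$Q$ (one unit per $q_2$), not the sharper $Q^{1/2}+Q^2/N^{1/2}$ you need; the ``determinant/divisor argument'' you defer to the cited papers is, concretely, this Weyl-differencing step, and it is the actual engine of the proof rather than a bookkeeping afterthought.
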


The term $q^Q$ in Theorem \ref{squaremodqe} corresponds to the variable $Q$ in Theorem \ref{powermodZi}, $q^N$ corresponds to $N$, and the term $2^{(N-Q)/2}$ 
to the factor $(QN)^{\varepsilon}$.\\ \\
{\bf Acknowledgements:} We would like to thank the referee for his valuable comments.\\ \\
\section{Preliminaries}
In this section, we provide the tools needed in this paper. First, we 
shall use the following general large sieve inequality for higher dimensions due to the authors.  
In fact we will need this result only for the two-dimensional case. 

\begin{Theorem} \label{basiclargesieve}
Let $R, N \in \BN, \ X_1,\cdots, X_R \in k_\infty^n$ and $(a_g)_{g \in A^n}$ be any sequence of complex numbers. Then 

\begin{align}\label{lshighdim}
\sum\limits_{i=1}^{R}\mathrel \bigg|\sum\limits_{g \in B_n(0,N)\cap A^n}a_g \ e(g\cdot X_i)\bigg|^2 \ll  \es q^{n(N+1)}\cdot \sK\cdot
\sum\limits_{g \in B_n(0,N)\cap A^n}|a_g|^2, 
\end{align}
where 
$$ 
\sK: = \max_{1\leq i_1\leq R}\#\Big\{1\leq i_2\leq R \ : \|X_{i_1}-X_{i_2}\|_{\infty}\leq  q^{-(N+2)}\Big\}
$$
and $\|\cdot\|_{\infty}$ is the induced distance on the torus $\BT_n =k_\infty^n/A^n$.
\end{Theorem}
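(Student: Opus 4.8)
The plan is to prove (\ref{lshighdim}) via the duality principle combined with an exact orthogonality relation for the additive character sums over the ball $B_n(0,N)\cap A^n$. Writing $S:=B_n(0,N)\cap A^n$ and noting that $S=\{g\in A^n : \deg g_i\le N \text{ for each } i\}$ has exactly $q^{n(N+1)}$ elements, the assignment $a\mapsto \big(\sum_{g\in S}a_g\, e(g\cdot X_i)\big)_{i=1}^R$ is a linear operator between finite-dimensional Hilbert spaces. By the duality principle, the bound (\ref{lshighdim}) with constant $\Delta:=q^{n(N+1)}\cdot\sK$ is equivalent to the dual inequality
\begin{align*}
\sum_{g\in S}\mathrel\bigg|\sum_{i=1}^R b_i\, e(-g\cdot X_i)\bigg|^2 \ll \Delta \sum_{i=1}^R |b_i|^2
\end{align*}
for every sequence $(b_i)_{i=1}^R$ of complex numbers, and I would establish this dual form instead.

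The key step is the evaluation of the character sum. Expanding the square on the left-hand side of the dual inequality gives $\sum_{i,j} b_i\overline{b_j}\, \mathcal{S}(X_j-X_i)$, where $\mathcal{S}(Y):=\sum_{g\in S} e(g\cdot Y)$ for $Y\in k_\infty^n$. Since $e$ is an additive character, $\mathcal{S}(Y)$ factors over the $n$ coordinates and, within each coordinate, over the $N+1$ coefficients $c_0,\dots,c_N\in\BF_q$ of $g_i=\sum_{m=0}^N c_m t^m$; the definition of $e$ reduces each coordinate factor to $\prod_{m=0}^N \sum_{c\in\BF_q} E\big(c\,[Y_i]_{-1-m}\big)$, where $[Y_i]_\ell$ denotes the coefficient of $t^\ell$ in $Y_i$. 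Orthogonality of the non-trivial character $E$ on $\BF_q$ then yields the clean formula
\begin{align*}
\mathcal{S}(Y)=\begin{cases} q^{n(N+1)}, & \text{if } \|Y\|_\infty\le q^{-(N+2)},\\ 0, & \text{otherwise,}\end{cases}
\end{align*}
the threshold $q^{-(N+2)}$ arising exactly from the requirement that the coefficients of $t^{-1},\dots,t^{-(N+1)}$ in each coordinate of $Y$ vanish. Since $\mathcal{S}(Y)$ depends only on $Y\bmod A^n$, passing to the torus metric $\|\cdot\|_\infty$ on $\BT_n$ is legitimate.

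It then remains to assemble the bound. Substituting the formula for $\mathcal{S}(Y)$ restricts the double sum to those pairs $(i,j)$ with $\|X_i-X_j\|_\infty\le q^{-(N+2)}$, and applying $|b_i\overline{b_j}|\le \tfrac12(|b_i|^2+|b_j|^2)$ together with the symmetry of the closeness relation gives
\begin{align*}
\sum_{g\in S}\mathrel\bigg|\sum_{i=1}^R b_i\, e(-g\cdot X_i)\bigg|^2 \le q^{n(N+1)}\sum_{i=1}^R |b_i|^2\,\#\big\{j:\|X_i-X_j\|_\infty\le q^{-(N+2)}\big\}\le \Delta\sum_{i=1}^R|b_i|^2,
\end{align*}
by the definition of $\sK$. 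By duality this proves (\ref{lshighdim}). The only genuinely delicate point is pinning down the orthogonality relation, and in particular getting the precision threshold $q^{-(N+2)}$ to match the spacing condition in the definition of $\sK$ exactly; once that is in place the remainder is a short formal computation, so I expect no substantive obstacle beyond careful bookkeeping of the coefficient indices.
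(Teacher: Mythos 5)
Your proof is correct. Note, however, that the paper itself gives no proof of this theorem: it simply cites \cite[Section 5]{BSi2}. Judging from the tools of that section which the paper does reproduce (Theorem \ref{Thm3} and Lemma \ref{fourier}, the Fourier transform of the characteristic function $\Phi_1$ of the unit ball), the cited proof runs through the same duality reduction but evaluates the complete exponential sum $\mathcal{S}(Y)=\sum_{g\in B_n(0,N)\cap A^n}e(g\cdot Y)$ via Poisson summation against $\Phi_1$, whereas you evaluate it directly by coefficient-wise orthogonality of the character $E$ on $\mathbb{F}_q$. These are essentially equivalent: your clean formula
\begin{equation*}
\mathcal{S}(Y)=q^{n(N+1)}\,\mathbf{1}\left[\|Y\|_{\infty}\le q^{-(N+2)}\right]
\end{equation*}
is precisely the content of Lemma \ref{fourier} in discrete form, and your bookkeeping of the coefficients of $t^{-1},\dots,t^{-(N+1)}$ correctly accounts for the threshold $q^{-(N+2)}$ matching the spacing condition in $\sK$. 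Your route has the minor advantages of being self-contained (no Poisson summation or Fourier inversion needed, only finite-field character orthogonality) and of yielding the inequality with explicit constant $1$ rather than an unspecified implied constant; the Poisson-summation route of \cite{BSi2} buys machinery that the paper needs anyway for the later sections, which is presumably why it is organized that way there.
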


\begin{proof}
This is found in \cite[section 5]{BSi2}.
\end{proof}

{\bf Remark:} Recently, in \cite{BSi1}, we pointed out an error in \cite{BSi2}. This error, however, does neither affect the above result nor any of 
the results stated below. \\

We further use the Poisson summation formula for lattices in $k_{\infty}^n$. 

\begin{Theorem}[Poisson Summation Formula] \label{Thm2} Let $\Lambda$ be a complete lattice in $k^n_\infty$ and let 
\begin{align*}
\Lambda'= \Big\{g\in k^n_\infty \mathrel \Big| f\cdot g \in A \text{ for all } f\in \Lambda \Big\}
\end{align*}
be the lattice dual to $\Lambda$.  Let $\Phi: k^n_\infty \to \BC$ be a function such that 
$$
\sum_{y\in \Lambda}|\Phi(h+y)|
$$ 
is uniformly convergent on compact subsets of $k^n_\infty$ and 
$$
\sum_{x\in \Lambda'}|\hat{\Phi}(x)|
$$ 
is convergent. Then 
\begin{align*}
\sum_{y\in \Lambda}\Phi(y) = \frac{1}{\vol\left(k_{\infty}^n/\Lambda\right)}\sum_{x\in \Lambda'}\hat{\Phi}(x),
\end{align*}  
where $\vol\left(k_{\infty}^n/\Lambda\right)$ is the volume of a fundamental mesh of $\Lambda$.
\end{Theorem}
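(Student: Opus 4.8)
The plan is to carry out the standard derivation of Poisson summation through harmonic analysis on the compact quotient group $T := k_\infty^n/\Lambda$, where $dg$ denotes a fixed Haar measure on $k_\infty^n$ and $dh$ the induced measure on $T$, so that $\vol(k_\infty^n/\Lambda)$ is the total mass of $T$. First I would form the $\Lambda$-periodization
\begin{equation*}
F(h) := \sum_{y \in \Lambda} \Phi(h+y),
\end{equation*}
which, by the assumed uniform convergence on compact subsets, defines a continuous function on the compact group $T$. The identity to be proved is exactly the Fourier expansion of $F$ evaluated at $h = 0$, so everything reduces to computing the Fourier coefficients of $F$ and identifying the characters of $T$.

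The key structural input is the identification of the Pontryagin dual of $T$ with the dual lattice $\Lambda'$. Here I would exploit that $e$ is trivial on $A$: for a polynomial $f = \sum_{i\ge 0} a_i t^i \in A$ the coefficient of $t^{-1}$ vanishes, whence $e(f) = E(0) = 1$. Consequently, for any $x \in \Lambda'$ the map $h \mapsto e(x\cdot h)$ descends to a well-defined character of $T$, since $e(x\cdot(h+y)) = e(x\cdot h)\,e(x\cdot y) = e(x\cdot h)$ for all $y \in \Lambda$, using $x\cdot y \in A$. Using that $e$ realizes $k_\infty^n$ as its own Pontryagin dual, and that the dual of the quotient $k_\infty^n/\Lambda$ is the annihilator of $\Lambda$ inside $\widehat{k_\infty^n}$, one checks that these exhaust the dual group: the characters of $T$ are precisely $\{\,h \mapsto e(x\cdot h) : x \in \Lambda'\,\}$, which is the function-field analogue of the fact that the characters of $\mathbb{R}^n/\mathbb{Z}^n$ are indexed by $\mathbb{Z}^n$.

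With the characters in hand, I would expand $F(h) = \sum_{x \in \Lambda'} c_x\, e(x\cdot h)$ and compute the coefficients by unfolding the integral over $T$ against the sum over $\Lambda$, the interchange being justified by the uniform convergence hypothesis:
\begin{equation*}
c_x = \frac{1}{\vol(k_\infty^n/\Lambda)} \int_{T} F(h)\, \overline{e(x\cdot h)}\, dh = \frac{1}{\vol(k_\infty^n/\Lambda)} \int_{k_\infty^n} \Phi(h)\, \overline{e(x\cdot h)}\, dh = \frac{\hat{\Phi}(x)}{\vol(k_\infty^n/\Lambda)}.
\end{equation*}
Evaluating the expansion at $h = 0$ then yields the claimed formula. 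The main obstacle is the analytic justification that the Fourier series of $F$ actually represents $F$ pointwise, in particular at $h = 0$; this is exactly where the second hypothesis enters. Since $\sum_{x\in\Lambda'}|\hat{\Phi}(x)|$ converges, the series $\sum_{x} c_x\, e(x\cdot h)$ converges absolutely and uniformly on $T$ to some continuous function; by term-by-term integration this limit has Fourier coefficients $c_x$, the same as the continuous function $F$, so uniqueness of Fourier expansions on the compact abelian group $T$ forces the two to coincide everywhere, giving $F(0) = \sum_{x\in\Lambda'} c_x$. In contrast to the archimedean case, the totally disconnected structure of $k_\infty^n$ makes the harmonic analysis on $T$ especially clean, so no smoothness or decay beyond the two stated hypotheses is required.
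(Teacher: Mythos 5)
Your proposal is correct, but it necessarily takes a different route from the paper, because the paper offers no internal argument for Theorem \ref{Thm2} at all: its ``proof'' is a citation of \cite[Theorem 3.2]{BSi2}, together with the remark that the condition $B(f,g)=1$ used there to define the dual lattice is equivalent to the condition $f\cdot g\in A$ in the present statement. Your route --- periodize $\Phi$ over $\Lambda$, identify the Pontryagin dual of the compact group $T=k_\infty^n/\Lambda$ with $\Lambda'$ via the character $e$, compute the Fourier coefficients of the periodization by unfolding, and then use the absolute convergence of $\sum_{x\in\Lambda'}|\hat\Phi(x)|$ together with uniqueness of Fourier coefficients to upgrade to pointwise equality at $h=0$ --- is the standard harmonic-analysis proof of Poisson summation, and it correctly isolates where each of the two stated convergence hypotheses enters. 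What your argument buys is self-containedness and transparency; what the citation buys is brevity and consistency with the conventions of \cite{BSi2} (normalization of the Fourier transform, definition of complete lattice), which this paper silently inherits.

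Two points in your write-up deserve more than ``one checks''. First, the identification of the dual group of $T$ with $\Lambda'$ ultimately rests on the fact that the annihilator of $A$ in $k_\infty$ under $e$ is exactly $A$: if $e(az)=1$ for all $a\in A$, then testing against $a=ct^{j}$ with $c\in\mathbb{F}_q$ and $j\ge 0$ gives $\Tr\left(cz_{-1-j}\right)=0$ for every $c$, and the non-degeneracy of the trace form of $\mathbb{F}_q/\mathbb{F}_p$ then forces every coefficient $z_{-1-j}$ to vanish, i.e.\ $z\in A$; testing only against powers of $t$ would not suffice when $q>p$. One also needs $\Lambda$ to be an $A$-module (true for a complete lattice, which is the $A$-span of a $k_\infty$-basis) to pass from ``$e(x\cdot y)=1$ for all $y\in\Lambda$'' to ``$x\cdot y\in A$ for all $y\in\Lambda$''. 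Second, your argument uses continuity of $\Phi$ --- to make the periodization $F$ continuous and the evaluation at $h=0$ meaningful --- even though this hypothesis is not explicit in the statement; it is genuinely needed (altering $\Phi$ at the single point $0$ changes the left-hand side but not $\hat\Phi$), and it is harmless here, since in this paper $\Phi$ is the characteristic function of a ball, which is continuous because balls in $k_\infty^n$ are both open and closed.
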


\begin{proof}
This is \cite[Theorem 3.2.]{BSi2}, where we note that the condition $B(f,g)=1$ in the definition of the dual lattice $\Lambda'$ in \cite[Theorem 3.2.]{BSi2} is 
equivalent to the condition $f\cdot g\in A$ above. 
\end{proof}

A simple consequence of this is the following extended version of the Poisson summation formula. 

\begin{Theorem}[Poisson Summation Formula] \label{Thm3}  Under the conditions of Theorem \ref{Thm2}, we have 
\begin{align} \label{extens}
\sum_{y\in h+\Lambda}\Phi\left(\frac{y}{t^L}\right) = \frac{q^{nL}}{\vol\left(k_{\infty}^n/\Lambda\right)}
\sum_{x\in \Lambda'} e(h\cdot x) \hat{\Phi}\left(t^Lx\right)
\end{align}  
for all $h\in k^n_\infty$ and $L\in \mathbb{Z}$.
\end{Theorem}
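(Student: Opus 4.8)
The plan is to derive the extended formula (\ref{extens}) from the basic Poisson summation formula in Theorem \ref{Thm2} by a change of variables, so essentially no new analysis is needed. The key observation is that both sides of (\ref{extens}) are obtained from Theorem \ref{Thm2} applied to a shifted and rescaled version of $\Phi$. I would define an auxiliary function $\Psi(g) := \Phi\left(t^{-L}(g+h)\right)$ and then apply the ordinary Poisson summation formula to $\Psi$ summed over the lattice $\Lambda$. The left-hand side of Theorem \ref{Thm2} applied to $\Psi$ gives
\begin{align*}
\sum_{y\in\Lambda}\Psi(y)=\sum_{y\in\Lambda}\Phi\left(\frac{y+h}{t^L}\right)=\sum_{y\in h+\Lambda}\Phi\left(\frac{y}{t^L}\right),
\end{align*}
which is exactly the left-hand side of (\ref{extens}). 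So the only real work is to compute the Fourier transform $\hat\Psi$ and to check that the hypotheses of Theorem \ref{Thm2} transfer from $\Phi$ to $\Psi$.

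First I would record the two transform identities that govern the change of variables. The translation $g\mapsto g+h$ introduces a character factor: the Fourier transform of $\Phi(\cdot+h)$ at $x$ equals $e(h\cdot x)\hat\Phi(x)$, since $e$ is an additive character and $e(h\cdot x)$ comes out of the defining integral. The scaling $g\mapsto t^{-L}g$ on $k_\infty^n$ dilates by the factor $q^{-L}$ in each coordinate (because $|t^{-L}|_\infty=q^{-L}$), so the Jacobian produces a factor $q^{nL}$, and the transform of $\Phi(t^{-L}\cdot)$ at $x$ equals $q^{nL}\hat\Phi(t^Lx)$. Combining these, $\hat\Psi(x)=q^{nL}\,e(h\cdot x)\,\hat\Phi(t^Lx)$. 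Substituting this into the right-hand side of Theorem \ref{Thm2} applied to $\Psi$ yields
\begin{align*}
\frac{1}{\vol\left(k_\infty^n/\Lambda\right)}\sum_{x\in\Lambda'}\hat\Psi(x)=\frac{q^{nL}}{\vol\left(k_\infty^n/\Lambda\right)}\sum_{x\in\Lambda'}e(h\cdot x)\hat\Phi\left(t^Lx\right),
\end{align*}
which is precisely the right-hand side of (\ref{extens}). Equating the two sides finishes the derivation.

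The only points requiring genuine care are the verification that $\Psi$ satisfies the convergence hypotheses of Theorem \ref{Thm2}, and the bookkeeping of the scaling constant. For the hypotheses: the sum $\sum_{y\in\Lambda}|\Psi(g+y)|=\sum_{y\in\Lambda}\left|\Phi\left(t^{-L}(g+h+y)\right)\right|$ is, after the affine reparametrization, a sum of the same shape as the convergent sum assumed for $\Phi$, so uniform convergence on compacta is preserved; similarly $\sum_{x\in\Lambda'}|\hat\Psi(x)|=q^{nL}\sum_{x\in\Lambda'}|\hat\Phi(t^Lx)|$ is a constant multiple of a subsum of the convergent series for $\hat\Phi$ (reindexing $x\mapsto t^Lx$ maps $\Lambda'$ into a scaled lattice), hence convergent. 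I expect the main obstacle to be getting the scaling exponents exactly right — in particular confirming that dilation by $t^{-L}$ contributes $q^{nL}$ rather than $q^{-nL}$ and that the argument of $\hat\Phi$ is $t^Lx$ and not $t^{-L}x$ — since the non-archimedean absolute value $|t|_\infty=q$ makes the direction of these factors easy to invert. This is where I would cross-check against the unscaled case $L=0,\ h=0$, which must recover Theorem \ref{Thm2} verbatim.
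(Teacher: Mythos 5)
Your proposal is correct and follows essentially the same route as the paper: the paper's proof also applies Theorem \ref{Thm2} to the shifted and rescaled function (via the change of variables $z=y-h$), with the factor $e(h\cdot x)$ emerging from the Fourier integral and the factor $q^{nL}$ from the dilation. Your write-up merely makes explicit the translation and dilation identities for $\hat\Phi$ and the transfer of the convergence hypotheses, which the paper leaves implicit.
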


\begin{proof}
This follows from Theorem \ref{Thm2} using the linear change of variables $z=y-h$. Since $y$ ranges over $h+\Lambda$, $z$ ranges over the lattice $\Lambda$. 
Now applying Theorem \ref{Thm2} above with
$y$ replaced by $z$, we
obtain \eqref{extens}, where the term $e(h\cdot x)$ on the right-hand side comes out of the calculation of the Fourier integral after this change of variables. 
\end{proof}

We will work with the characteristic function of the unit ball $B(0,1)$. For our application of the Poisson summation formula, we need the 
Fourier transform of this function, which was also calculated in \cite{BSi2}.

\begin{Lemma} \label{fourier}
Let $\Phi_1: k^n_\infty \to \BC$ be defined as 
\begin{equation} \label{Phidef}
\Phi_1(y): = 
\begin{cases}
1, &  \text{if} \ |y|_\infty \leq 1, \\
0, & \text{otherwise.}
\end{cases} 
\end{equation}
Then the Fourier transform $\hat\Phi_1$ of $\Phi_1$ satisfies 
\begin{align*}
\hat \Phi_1(x) = q^n \Phi_1\left(t^2x\right)= \begin{cases}
q^n, &  \text{if} \ |x|_\infty \leq q^{-2}, \\
0, & \text{otherwise}.
\end{cases}
\end{align*}
\end{Lemma}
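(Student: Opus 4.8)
The plan is to evaluate the defining integral of $\hat\Phi_1$ directly, using that $\Phi_1$ is the characteristic function of a polydisc and therefore factors across coordinates. Recall that the Fourier transform is $\hat\Phi_1(x)=\int_{k_\infty^n}\Phi_1(y)\,e(x\cdot y)\,dy$ with respect to the Haar measure on $k_\infty^n$ normalized as in \cite{BSi2}, so that the unit ball $B_n(0,0)=\{y:|y|_\infty\le 1\}$ has volume $q^n$; equivalently each one-dimensional factor $\{y_i:|y_i|_\infty\le 1\}\subset k_\infty$ has volume $q$. Since $\Phi_1(y)=\prod_{i=1}^n\Phi_1(y_i)$ (reading $\Phi_1$ coordinatewise) and $e(x\cdot y)=\prod_{i=1}^n e(x_iy_i)$, the integral splits into a product of $n$ identical one-dimensional integrals $\int_{|y_i|_\infty\le 1}e(x_iy_i)\,dy_i$. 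Because $\Phi_1$ is even and the domain of integration is symmetric, the sign convention in the definition of the transform is immaterial.

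First I would reduce everything to the following one-dimensional dichotomy: the map $\chi_x\colon\{y:|y|_\infty\le 1\}\to\BC^\times$, $y\mapsto e(xy)$, is an additive character of this compact group, and it is trivial precisely when $|x|_\infty\le q^{-2}$. The easy direction is that if $|x|_\infty\le q^{-2}$ and $|y|_\infty\le 1$, then $|xy|_\infty\le q^{-2}$, so the coefficient of $t^{-1}$ in $xy$ vanishes and $e(xy)=E(0)=1$. For the converse I would argue contrapositively: if $|x|_\infty=q^m$ with $m\ge -1$, write $x=\sum_{i\le m}b_it^i$ with $b_m\neq 0$, and test $\chi_x$ on the family $y=c\,t^{-m-1}$ with $c\in\BF_q$, all of which lie in $\{|y|_\infty\le 1\}$ since $-m-1\le 0$. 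The coefficient of $t^{-1}$ in $xy$ is then $cb_m$, so $\chi_x(c\,t^{-m-1})=E(cb_m)$.

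Given this criterion, the standard fact that a nontrivial character integrates to zero over a compact group (via the translation trick $\int\chi_x(y)\,dy=\chi_x(y_0)\int\chi_x(y)\,dy$ for a suitable $y_0$) shows that each one-dimensional integral equals the normalized volume $q$ when $|x_i|_\infty\le q^{-2}$ and equals $0$ otherwise. Multiplying the $n$ coordinate contributions yields $\hat\Phi_1(x)=q^n$ if $|x|_\infty\le q^{-2}$ and $\hat\Phi_1(x)=0$ otherwise. Finally, since $|t^2x|_\infty\le 1$ is equivalent to $|x|_\infty\le q^{-2}$, this is exactly $q^n\Phi_1(t^2x)$, as asserted.

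The genuinely delicate point is the converse half of the triviality criterion, where one must avoid conflating $E(a)=1$ with $a=0$: a single test element need not detect nontriviality, since its $t^{-1}$-coefficient could land in the kernel of $E$. It is therefore essential to test the entire one-parameter family $\{c\,t^{-m-1}:c\in\BF_q\}$ and to invoke that $b_m\neq 0$ makes $c\mapsto cb_m$ surjective onto $\BF_q$, so that nontriviality of $E$ forces $\chi_x$ to be nontrivial. Everything else is routine bookkeeping, together with pinning down the measure normalization so that the leading constant emerges as $q^n$ rather than $1$.
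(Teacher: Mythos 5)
Your proof is correct, but there is nothing in the paper to compare it against: the paper's ``proof'' of this lemma is only the citation \cite[Section 5]{BSi2}, so what you have produced is the self-contained computation that the paper outsources. Your route is the standard direct evaluation, and the delicate points are all handled properly. First, your normalization is the right one: the paper later uses $\vol(k_\infty/A)=1$ (in the section on linear exponential sums), under which the one-dimensional ball $\{y : |y|_\infty\le 1\}$ is the disjoint union of the $q$ translates $c+\{|y|_\infty\le q^{-1}\}$, $c\in\BF_q$, hence has volume $q$; this is exactly what makes the constant come out as $q^n$, and one can check it is also the normalization consistent with the paper's Poisson summation for $\Lambda=A^n$ and with the Fourier inversion $\Phi(y)=\hat{\hat\Phi}(-y)$ invoked later. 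Second, the coordinatewise factorization is legitimate since both the indicator of the polydisc and the character $\Psi_x$ split as products. Third, and most importantly, your triviality criterion is proved correctly: for $|x|_\infty=q^m$ with $m\ge -1$ and $y=ct^{-m-1}$, the $t^{-1}$-coefficient of $xy$ is exactly $cb_m$ (the terms $b_it^{i-m-1}$ with $i<m$ land in degrees below $-1$), and you are right that testing the whole family over $c\in\BF_q$ is essential, since the kernel of $E$ equals the kernel of $\Tr:\BF_q\to\BF_p$ and is generally nontrivial, so a single test vector could give $E=1$ spuriously; surjectivity of $c\mapsto cb_m$ plus nontriviality of $E$ closes this gap. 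The translation trick on the compact open subgroup $\{|y|_\infty\le 1\}$ then yields orthogonality, and the identification $|x|_\infty\le q^{-2}\iff|t^2x|_\infty\le 1$ finishes the claim. In short: a correct, complete proof by the expected method, filling in a step the paper only references.
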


\begin{proof}
This is found in \cite[section 5]{BSi2}. 
\end{proof}

\section{Application of the two-dimensional large sieve}
Now we return to the large sieve for $K = k\left(\sqrt{\alpha}\right)$.  We begin with restricting the moduli $f$ to subsets $\mathcal{S}$ of
$\mathbb{A}$. Further, we write 
$$
\mathcal{S}(Q):=\{ f\in \mathcal{S} \ :\ \mathbf{N}(f) = q^Q\}.
$$
Thus, we are interested in estimating the quantity
\begin{equation} \label{Tdef}
T : = \sum\limits_{f\in \mathcal{S}(Q)} \ \sum\limits_{\substack{r \bmod{f}\\ (r,f)=1}} 
\left|\sum \limits_{g \in \mathbf{B}(0, N)\cap \mathbb{A}}  a_g \cdot e\left(\mbox{\rm Tr}\left(\frac{gr}{2\sqrt{\alpha}f}\right)\right)\right|^2 .
\end{equation}
We shall later confine ourselves to the cases when $\mathcal{S}$ equals $\mathbb{A}$ or the set of 
squares.  

In the following, we write
$$
g = s + w\sqrt{\alpha} , \  r =  x + y \sqrt{\alpha}, \  f =  u + v \sqrt{\alpha}, 
$$
where $s, w, x, y, u , v \in A$. Then  
\begin{equation*}
\frac{gr}{2\sqrt{\alpha}f} 
=  \frac{ s(xu-yv \alpha) +w \alpha(yu-xv) + (w(xu-yv \alpha)+ s(yu-xv))\sqrt{\alpha}}{2 \sqrt{\alpha}\cdot \mbox{Norm}(f) }.
\end{equation*}
It follows that
\begin{equation*}
\begin{split}
\mathbf{Tr}\left(\frac{gr}{2\sqrt{\alpha}f}\right) = & \frac{ w(xu-yv \alpha)+ s(yu-xv)}{  \mbox{Norm}(f) }\\ 
= & \left( \frac{   yu-xv}{ \mbox{Norm}(f) }, 
\frac{ xu-yv \alpha}{  \mbox{Norm}(f) }\right)\cdot (s,w).
\end{split}
\end{equation*}
Hence, we can re-write $T$ in the form
\begin{equation*}
T = \sum\limits_{f\in \mathcal{S}(Q)} \ \sum\limits_{\substack{r \bmod{f}\\ (r,f)=1}} \left|\sum \limits_{g \in \mathbf{B}(0, N)\cap \mathbb{A}}  
a_g \cdot e\left(\left( \frac{   yu-xv}{ \mbox{Norm}(f) }, \frac{ xu-yv \alpha}{ \mbox{Norm}(f) }\right)\cdot (s,w)\right)\right|^2.
\end{equation*}
The condition  
$$
g = s + w \sqrt{\alpha} \in \mathbf{B}(0, N)\cap \mathbb{A}
$$
is equivalent to  
$\deg(s^2-w^2 \alpha)\leq N,$ which in turn is 
equivalent to 
$$
\max\{\deg s^2, \deg(w^2 \alpha)\} \leq N
$$
since in the difference $s^2-w^2\alpha$ above, the highest terms of $s^2$ and $w^2\alpha$ don't cancel out because we assume 
that $\alpha$ is of odd degree or the leading coefficient of $\alpha$ is a non-square in 
$\mathbb{F}_q$. Further, the above inequality implies that
$(s,w) \in B_2(0,N/2)\cap A^2$. It follows that
\begin{equation*}
\begin{split}
T = & \sum\limits_{f\in \mathcal{S}(Q)} \ \sum\limits_{\substack{r \bmod{f}\\ (r,f)=1}}\\ & 
\left|\sum\limits_{(s,w) \in B_2(0,N/2)\cap A^2}  a_{g} \cdot e\left(\left( \frac{yu-xv}{ \mbox{Norm}(f) }, \frac{ xu-yv \alpha}{ 
\mbox{Norm}(f) }\right)\cdot (s,w)\right)\right|^2, 
\end{split}
\end{equation*}
where we set $a_g:=0$ if $2\deg(w)+\deg(\alpha)>N$. 

Employing Theorem \ref{basiclargesieve} with $n=2$, we deduce that
\begin{equation} \label{Tbound}
T\ll q^{N+2}\cdot \sK\cdot\sum\limits_{g \in  \mathbf{B}(0, N)\cap \mathbb{A}}|a_g|^2,
\end{equation}
where 
\begin{equation} \label{sK}
\begin{split}
\sK: = & \max\limits_{\substack{r_1 \in \mathbb{A}, \ f_1\in \mathcal{S}(Q) \\ (r_1,f_1)=1} }
\#\bigg\{(r_2,f_2) \ :\ f_2\in \mathcal{S}(Q), \ (r_2,f_2)=1, \\ \\ & 
\mathrel\bigg\| 
\left(\frac{   y_1u_1-x_1v_1}{ \mbox{\rm Norm}(f_1) }, \frac{ x_1u_1-y_1v_1 \alpha}{\mbox{\rm Norm}(f_1) }\right)-
\left(\frac{   y_2u_2-x_2v_2}{ \mbox{\rm Norm}(f_2) }, 
\frac{ x_2u_2-y_2v_2 \alpha}{\mbox{\rm Norm}(f_2) }\right) 
\mathrel\bigg\|_{\infty}\\
& \leq  q^{-(N/2+2)}\bigg\}
\end{split}
\end{equation}
with the conventions that
$$  
f_j =  u_j + v_j \sqrt{\alpha} , \hspace{.2cm} r_j = x_j + y_j \sqrt{\alpha} 
$$
for $j=1,2$. Thus, the remaining task is to bound the quantity $\sK$. 

\section{General bounds for $\sK$}
We observe that in the definition of $\sK$ in \eqref{sK}, 
the distance $||.||_{\infty}$ on $\mathbb{T}_2$ can be replaced by the distance $|.|_{\infty}$ on 
$k_{\infty}^2$. 
Moreover,
$$
\frac{r_j}{f_j} = \frac{ x_ju_j-y_jv_j \alpha}{ \Norm(f_j) } + \frac{   (y_ju_j-x_jv_j) }{ \Norm(f_j) } \cdot \sqrt{\alpha}
$$
for $j=1,2$. It follows that 
\begin{align*}
\sK \le 
\max\limits_{\substack{r_1 \in \mathbb{A}, \ f_1\in \mathcal{S}(Q) \\ (r_1,f_1)=1} }
\#\bigg\{(r_2,f_2) \ :\ f_2\in \mathcal{S}(Q), \ (r_2,f_2)=1, \
\mathbf{N}\left(\frac{r_1}{f_1}-\frac{r_2}{f_2}\right)\leq q^{\ell -( N+4) } \bigg\},
\end{align*}
where
$$
\ell := \mbox{deg}(\alpha).
$$
Moreover, since $\mathbf{N}(f_j) = q^Q$ for $j=1,2$, the inequality
$$
 \mathbf{N}\left(\frac{r_1}{f_1}-\frac{r_2}{f_2}\right)\leq q^{\ell -( N+4) }
$$
above is equivalent to
$$
\deg\left(\Norm\left(r_1f_2-r_2f_1\right)\right)\leq 2Q+ \ell -(N+4).
$$
Therefore,
\begin{equation*} 
\begin{split}
\sK \le & \max\limits_{\substack{r_1 \in \mathbb{A}, \ f_1 \in \mathcal{S}(Q)  
\\ (r_1,f_1)=1} }\#\bigg\{(r_2,f_2) \ :\ f_2\in \mathcal{S}(Q), \ (r_2,f_2)=1, \
\mathbf{D}\left(r_1f_2-r_2f_1\right)\leq M  \bigg\},
\end{split}
\end{equation*}
where
\begin{equation} \label{Mdef}
M:=2Q+\ell -( N+4),
\end{equation}
and for $c=a+b\sqrt{\alpha}\in A$, we set
$$
\mathbf{D}(c):=\deg(\Norm(c)).
$$
We write the above in the form
\begin{equation} \label{sKbound}
\begin{split}
\sK \le & \max\limits_{\substack{r_1 \in \mathbb{A}, \ f_1 \in \mathcal{S}(Q) \\ (r_1,f_1)=1} }
\sum\limits_{\substack{h\in \mathbb{A}\\ \mathbf{D}(h)\le M}}\
\sum\limits_{\substack{f_2\in \mathcal{S}(Q)\\ r_1f_2\equiv h \bmod{f_1}}} 1\\
\le & \max\limits_{\substack{r_1 \in \mathbb{A}, \ f_1 \in \mathcal{S}(Q) \\ (r_1,f_1)=1} }\
\sum\limits_{f_2\in \mathcal{S}(\le Q)} \
\sum\limits_{\substack{h\in \mathbb{A}\\ \mathbf{D}(h)\le M\\ h \equiv r_1f_2 \bmod{f_1}}} 1,
\end{split}
\end{equation}
where in the last line, we have extended the summation over $f_2$ to the set
$$
\mathcal{S}(\le Q):=\{ f\in \mathcal{S} \ :\ \mathbf{N}(f) \le q^Q\}=\{ f\in \mathcal{S} \ :\ \mathbf{D}(f) \le Q\},
$$
which will later turn out to be convenient.

\section{Poisson summation}
If $M<0$, then the innermost sum over $h$ in the second line of \eqref{sKbound} satisfies
\begin{equation*}
\sum\limits_{\substack{h\in \mathbb{A}\\ \mathbf{D}(h)\le M\\ h \equiv r_1f_2 \bmod{f_1}}} 1 = 
\begin{cases}
1 & \mbox{ if } f_2\equiv 0 \bmod{f_1}\\
0 & \mbox{ otherwise}
\end{cases}
\end{equation*}
since $(r_1,f_1)=1$. To see this, note that if $h\not=0$, then $\mathbf{D}(h)\ge 0$. 
Since $\mathbf{D}(f_1)=Q$, the number of polynomials $f_2$ such that $\mathbf{D}(f_2)\le Q$ and $f_2\equiv 0 \bmod{f_1}$ is bounded by
$q$. It follows from \eqref{sKbound} that
\begin{equation} \label{M<0}
\sK \le q \quad \mbox{if } M<0.
\end{equation}

If $M\ge 0$, we transform the sum over $h$ in question using Poisson summation. Any multiple of 
\begin{equation} \label{f1def}
f_1=u_1+v_1\sqrt{\alpha}
\end{equation}
by an element of $\mathbb{A}$ takes the form
$$
(p_1+q_1\sqrt{\alpha})(u_1+v_1\sqrt{\alpha})=(p_1u_1+q_1v_1\alpha)+(p_1v_1+q_1u_1)\sqrt{\alpha}.
$$
This corresponds to a lattice of the form
$$
\Lambda=\left\{p_1 \binom{u_1}{v_1} + q_1 \binom{v_1\alpha}{u_1}\ :\ (p_1,q_1)\in A^2\right\}
$$
in the sense that the multiples of $f_1$ are precisely of the form $a+b\sqrt{\alpha}$, where 
$$
\binom{a}{b}\in \Lambda.
$$
Let 
\begin{equation} \label{r1f2def}
r_1f_2=c+d\sqrt{\alpha}.
\end{equation}
Then it follows that
\begin{equation} \label{less}
\begin{split}
\sum\limits_{\substack{h\in \mathbb{A}\\ \mathbf{D}(h)\le M\\ h \equiv r_1f_2 \bmod{f_1}}} 1
\le \sum\limits_{\substack{y\in (c,d)+\Lambda\\ \max\{\deg(c),\deg(d)\}\le [M/2]}} 1
= \sum\limits_{y\in (c,d)+\Lambda} \Phi_1\left(\frac{y}{t^{[M/2]}}\right), 
\end{split}
\end{equation}
where $\Phi$ is defined as in \eqref{Phidef}. Using the Poisson summation formula, Theorem \ref{Thm3}, and Lemma \ref{fourier}, we get
\begin{equation*}
 \sum\limits_{y\in (c,d)+\Lambda} \Phi_1\left(\frac{y}{t^{[M/2]}}\right) = 
 \frac{q^{2[M/2]+2}}{\vol\left(k_{\infty}^2/\Lambda\right)}
\sum_{\substack{x\in \Lambda'\\ |x|_{\infty}\le t^{-([M/2]+2)}}} e((c,d)\cdot x).
\end{equation*}
In our case, the dual lattice takes the form
$$
\Lambda'= \frac{1}{\mbox{Norm}(f_1)} \cdot \hat\Lambda:=
\frac{1}{\mbox{Norm}(f_1)} \cdot \left\{p_1 \binom{u_1}{-v_1\alpha} + q_1 \binom{-v_1}{u_1}\ :\ (p_1,q_1)\in A^2\right\},
$$
and the volume of a fundamental mesh of $\Lambda$ is 
$$
\vol\left(k_{\infty}^2/\Lambda\right)=\mathbf{N}(f_1)=q^Q.
$$
Taking into account that $\mathbf{N}(f_1)=q^Q$, it follows that 
\begin{equation*}
\begin{split}
& \sum\limits_{y\in (c,d)+\Lambda} \Phi_1\left(\frac{y}{t^{[M/2]}}\right)\\ 
= & q^{2[M/2]+2-Q}
\sum_{\substack{\hat{x}\in \hat \Lambda\\ |\hat{x}|_{\infty}\le q^{Q-([M/2]+2)}}} e\left((c,d)\cdot \frac{\hat{x}}{\mbox{Norm}(f_1)}\right)\\
= & q^{2[M/2]+2-Q}
\sum_{\substack{(p_1,q_1)\in A^2\\ \deg(p_1u_1-q_1v_1)\le Q-([M/2]+2)\\ 
\deg(-p_1v_1\alpha+q_1u_1)\le Q-([M/2]+2)}} 
e\left(\frac{(cp_1u_1-dp_1v_1\alpha)+(-cq_1v_1+dq_1u_1)}{\mbox{Norm}(f_1)}\right)\\
= & q^{2[M/2]+2-Q}
\sum_{\substack{q_1+p_1\sqrt{\alpha}\in \mathbb{A}\\ \deg(p_1u_1-q_1v_1)\le Q-([M/2]+2) \\ \deg(-p_1v_1\alpha+q_1u_1)\le Q-([M/2]+2)}}
e\left({\bf Tr}\left(\frac{(q_1+p_1\sqrt{\alpha})(c+d\sqrt{\alpha})(u_1-v_1\sqrt{\alpha})}{2\mbox{Norm}(f_1)\sqrt{\alpha}}\right)
\right).
\end{split}
\end{equation*}
Recalling \eqref{f1def} and \eqref{r1f2def}, and writing $j=q_1+p_1\sqrt{\alpha}$,
$$
\Re(a+b\sqrt{\alpha})=a,\ \Im(a+b\sqrt{\alpha})=b,\ \overline{a+b\sqrt{\alpha}}=a-b\sqrt{\alpha} \quad \mbox{for } a+b\sqrt{\alpha}\in 
\mathbb{A}
$$
and 
\begin{equation} \label{M1M2def}
M_1:=2[M/2]+2-Q \quad \mbox{and}\quad M_2:=Q-([M/2]+2),
\end{equation}
we arrive at
\begin{equation} \label{arrive}
\begin{split}
\sum\limits_{y\in (c,d)+\Lambda} \Phi_1\left(\frac{y}{t^{[M/2]}}\right)= & q^{M_1}
\sum_{\substack{j\in \mathbb{A}\\ \deg(\Re(j\overline{f_1}))\le M_2 \\ \deg(\Im(j\overline{f_1}))\le M_2}}
e\left({\bf Tr}\left(\frac{jr_1f_2 \overline{f_1} }{2\mbox{Norm}(f_1)\sqrt{\alpha}}\right)
\right)\\
= & q^{M_1}
\sum_{\substack{j\in \mathbb{A}\\ \deg(\Re(j\overline{f_1}))\le M_2 \\ \deg(\Im(j\overline{f_1}))\le M_2}}
e\left({\bf Tr}\left(\frac{jr_1f_2}{2f_1\sqrt{\alpha}}\right)
\right).
\end{split}
\end{equation}
Combining \eqref{sKbound}, \eqref{less} and \eqref{arrive}, exchanging summations, and using \eqref{Mdef} and \eqref{M1M2def}, we get
\begin{equation} \label{sKnewbound}
\begin{split}
\sK \le & q^{M_1}\max\limits_{\substack{r_1 \in \mathbb{A}, \ f_1 \in \mathcal{S}(Q) \\ (r_1,f_1)=1} }\
\sum\limits_{f_2\in \mathcal{S}(\le Q)} \sum_{\substack{j\in \mathbb{A}\\ \deg(\Re(j\overline{f_1}))\le M_2 \\ \deg(\Im(j\overline{f_1}))\le M_2}}
e\left({\bf Tr}\left(\frac{jr_1f_2}{2f_1\sqrt{\alpha}}\right)\right)\\
\le & q^{M_1}\max\limits_{\substack{r_1 \in \mathbb{A}, \ f_1 \in \mathcal{S}(Q) \\ (r_1,f_1)=1} }\
\sum_{\substack{j\in \mathbb{A}\\ \mathbf{D}(j) \le 2(M_2-Q)+\ell}} \left| 
\sum\limits_{f_2\in \mathcal{S}(\le Q)} 
e\left({\bf Tr}\left(\frac{jr_1f_2}{2f_1\sqrt{\alpha}}\right)\right) \right|\\
\le & q^{Q+\ell-(N+2)}\max\limits_{\substack{r_1 \in \mathbb{A}, \ f_1 \in \mathcal{S}(Q) \\ (r_1,f_1)=1} }\
\sum_{\substack{j\in \mathbb{A}\\ \mathbf{D}(j) \le N+1-Q}} \left| 
\sum\limits_{f_2\in \mathcal{S}(\le Q)} 
e\left({\bf Tr}\left(\frac{jr_1f_2}{2f_1\sqrt{\alpha}}\right)\right) \right|
\end{split}
\end{equation}
if $M\ge 0$.

\section{Linear exponential sums}
Suppose that $X\ge \ell$. In the following, we estimate linear exponential sums of the form
$$
\Sigma(X,h):=\sum\limits_{\substack{x\in \mathbb{A}\\ \mathbf{D}(x)\le X}} 
e\left(\mathbf{Tr}\left(\frac{h\cdot x}{2\sqrt{\alpha}}\right)\right),
$$
where $h\in K$. Let
$$
h=a+b\sqrt{\alpha} \quad \mbox{with } a,b\in k.
$$
Then 
\begin{equation*}
\begin{split}
\Sigma(X,h)= & \sum\limits_{\substack{(u,v)\in \mathbb{A}^2\\ \deg(u)\le [X/2]\\ \deg(v)\le [(X-\ell)/2]}} 
e\left(ub+va\right)\\ = & \left(\sum\limits_{\substack{u\in A\\ \deg(u)\le [X/2]}} e(ub)\right)\cdot 
\left(\sum\limits_{\substack{v\in A\\ \deg(t)\le [(X-\ell)/2]}} e(va)\right)\\
= & \left(\sum\limits_{u\in A} e(ub)\cdot \Phi_1\left(t^{-[X/2]}u\right)\right)\cdot 
\left(\sum\limits_{v\in A} e(va)\cdot \Phi_1\left(t^{-[(X-\ell)/2]}v\right)\right).
\end{split}
\end{equation*}
Suppose that $\hat\Phi=\Phi_1$. Using Lemma \ref{fourier} with $n=1$, we have  
\begin{align*}
\Phi(y)=\hat{\hat \Phi}(-y) = \hat\Phi_1(-y)= q \hat\Phi_1\left(-t^2y\right)= \begin{cases}
q, &  \text{if} \ |y|_\infty \leq q^{-2}, \\
0, & \text{otherwise},
\end{cases}
\end{align*}
where the equation $\Phi(y)=\hat{\hat \Phi}(-y)$ is a consequence of the Fourier inversion formula. 
Hence, using Theorem \ref{Thm3} with $n=1$ and $L=[X/2]$, and noting that 
$
\vol(k_{\infty}/A)=1,
$
we have, denoting by $||.||_{\infty}$ the metric on $\mathbb{T}_1$ induced by $|.|_{\infty}$,
\begin{equation*}
\begin{split}
\sum\limits_{u\in A} e(ub)\cdot \Phi_1\left(t^{-[X/2]}u\right) = &
\sum\limits_{u\in A} e(ub)\cdot \hat\Phi\left(t^{-[X/2]}u\right)\\ = & 
q^{[X/2]} \sum_{y\in b+A}\Phi\left(-t^{[X/2]}y\right)\\ = &
\begin{cases} 
q^{[X/2]+1}  & \mbox{ if } ||b||_{\infty}\le q^{-(2+[X/2])}\\ 
0 & \mbox{ otherwise}
\end{cases}
\end{split}
\end{equation*}
(only one term in the sum above survives) and similarly,
\begin{equation*}
\begin{split}
\sum\limits_{v\in A} e(va)\cdot \Phi_1\left(t^{-[X/2]}v\right) = 
\begin{cases} 
q^{[(X-\ell)/2]+1}  & \mbox{ if } ||a||_{\infty}\le q^{-(2+[(X-\ell)/2])}\\ 
0 & \mbox{ otherwise.}
\end{cases}
\end{split}
\end{equation*}
Recall the definition of $\mathbf{N}_{\mathbf{T}}(h)$ in 
\eqref{newtorus}. We observe that if $\mathbf{N}_{\mathbf{T}}(h)\le q^{\ell-X-1}$, then $||a||_{\infty}\le q^{-(2+[(X-\ell)/2])}$ and $||b||_{\infty}\le q^{-(2+[X/2])}$. We
deduce the following.

\begin{Lemma} \label{linexpsum}
If $X\ge \ell$, then
$$
|\Sigma(X,h)|\le \begin{cases} q^{X+2} & \mbox{ if } \mathbf{N}_{\mathbf{T}}(h) 
\le q^{\ell-X-1}\\ 0 & \mbox{ otherwise.} \end{cases}
$$
\end{Lemma}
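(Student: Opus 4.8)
The plan is to assemble the lemma from the two one-variable evaluations already carried out, the only genuinely new ingredient being a clean description of $\mathbf{N}_{\mathbf{T}}(h)$ in terms of the torus distances of the two coordinates of $h$. First I would record that the factorisation above writes $\Sigma(X,h)$ as the product of $\sum_{u\in A} e(ub)\,\Phi_1(t^{-[X/2]}u)$ and $\sum_{v\in A} e(va)\,\Phi_1(t^{-[(X-\ell)/2]}v)$, each of which has just been evaluated by Poisson summation (Theorem \ref{Thm3}) together with Lemma \ref{fourier}. Hence $\Sigma(X,h)$ is either $0$ or equal to $q^{[X/2]+[(X-\ell)/2]+2}$, the latter occurring exactly when \emph{both} survival conditions $\|b\|_\infty\le q^{-(2+[X/2])}$ and $\|a\|_\infty\le q^{-(2+[(X-\ell)/2])}$ hold. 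Since $[X/2]+[(X-\ell)/2]\le X/2+(X-\ell)/2\le X$, the nonzero value never exceeds $q^{X+2}$; this already yields $|\Sigma(X,h)|\le q^{X+2}$ unconditionally, and in particular whenever $\mathbf{N}_{\mathbf{T}}(h)\le q^{\ell-X-1}$.

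It remains to show that $\Sigma(X,h)=0$ as soon as $\mathbf{N}_{\mathbf{T}}(h)>q^{\ell-X-1}$, and for this the key step is the identity
\[
\mathbf{N}_{\mathbf{T}}(h)=\max\{\|a\|_\infty^2,\ q^{\ell}\|b\|_\infty^2\},
\]
where on the right $\|\cdot\|_\infty$ denotes the distance on the one-dimensional torus $\mathbb{T}_1=k_\infty/A$. I would prove this in two moves. For any $c+d\sqrt{\alpha}\in K_\infty$ with $c,d\in k_\infty$, the same no-cancellation argument used earlier for $\deg(s^2-w^2\alpha)$ — valid precisely because $\alpha$ has odd degree or non-square leading coefficient — gives $\mathbf{N}(c+d\sqrt{\alpha})=\max\{|c|_\infty^2,\ q^{\ell}|d|_\infty^2\}$. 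Then, writing a representative of $h$ modulo $\mathbb{A}=A+A\sqrt{\alpha}$ as $(a+p)+(b+q)\sqrt{\alpha}$ with $p,q\in A$, the two coordinates may be reduced independently, so the infimum of the maximum splits into the maximum of the two separate infima, which are exactly $\|a\|_\infty^2$ and $q^{\ell}\|b\|_\infty^2$.

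Granting this identity, suppose $\mathbf{N}_{\mathbf{T}}(h)>q^{\ell-X-1}$, so that $\|a\|_\infty^2>q^{\ell-X-1}$ or $q^{\ell}\|b\|_\infty^2>q^{\ell-X-1}$. Taking square roots and using the elementary inequalities $[(X-\ell)/2]\ge (X-\ell-3)/2$ and $[X/2]\ge (X-3)/2$, the first alternative forces $\|a\|_\infty>q^{-(2+[(X-\ell)/2])}$ and the second forces $\|b\|_\infty>q^{-(2+[X/2])}$; in either case one of the two survival conditions fails, so $\Sigma(X,h)=0$, as required. The main obstacle is exactly this last bookkeeping together with the norm identity: one must match the single threshold $q^{\ell-X-1}$ attached to $\mathbf{N}_{\mathbf{T}}(h)$ against the two floor-dependent thresholds governing the individual factors, and one must run the argument in the correct logical direction, namely that it is the \emph{failure} of $\mathbf{N}_{\mathbf{T}}(h)\le q^{\ell-X-1}$ that has to be pushed through the factors rather than its validity. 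The hypothesis $X\ge\ell$ is used to keep the range $\deg v\le[(X-\ell)/2]$ nonnegative, so that the second factor and the floor estimates behave as stated.
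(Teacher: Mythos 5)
Your proposal is correct, and its skeleton --- factorising $\Sigma(X,h)$ into two one-variable sums, evaluating each by Poisson summation (Theorem \ref{Thm3} together with Lemma \ref{fourier}), and then comparing thresholds --- is exactly the paper's. Where you diverge is in the one step that constitutes the actual deduction of the lemma, and there your version is not just different but more careful than the paper's. The paper concludes by asserting ``if $\mathbf{N}_{\mathbf{T}}(h)\le q^{\ell-X-1}$, then $\|a\|_{\infty}\le q^{-(2+[(X-\ell)/2])}$ and $\|b\|_{\infty}\le q^{-(2+[X/2])}$'' and deduces the lemma from that. This is the converse of what is needed: to get $\Sigma(X,h)=0$ in the ``otherwise'' case one must show that survival of both factors forces $\mathbf{N}_{\mathbf{T}}(h)\le q^{\ell-X-1}$, i.e.\ that failure of the norm condition kills at least one factor --- precisely the contrapositive you prove. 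Moreover, the paper's stated implication is in fact false by one power of $q$: taking $X=\ell$ and $h=1/t$ (so $a=1/t$, $b=0$) gives $\mathbf{N}_{\mathbf{T}}(h)=q^{-2}\le q^{\ell-X-1}=q^{-1}$, yet $\|a\|_{\infty}=q^{-1}>q^{-(2+[(X-\ell)/2])}=q^{-2}$, so the survival condition fails (the lemma itself is unharmed, since then $\Sigma(X,h)=0\le q^{X+2}$). Your explicit identity $\mathbf{N}_{\mathbf{T}}(h)=\max\{\|a\|_{\infty}^{2},\, q^{\ell}\|b\|_{\infty}^{2}\}$, proved via the no-cancellation argument over $k_{\infty}$ and the independent reduction of the two coordinates modulo $A$, is exactly the ingredient the paper uses implicitly but never states, and your floor-function bookkeeping ($[(X-\ell)/2]\ge (X-\ell-3)/2$ and $[X/2]\ge (X-3)/2$ are precisely what is needed, with equality in the resulting exponent bound) checks out, as does the unconditional bound $q^{[X/2]+[(X-\ell)/2]+2}\le q^{X+2}$. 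In short: a correct proof along the paper's route, which in addition repairs the logical direction (and a literal falsehood) in the paper's own closing observation.
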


Next we bound sums of the form
$$
U(X,Y;f,r):=\sum\limits_{\substack{\gamma\in \mathbb{A}\setminus \{0\} \\ 
\mathbf{N}(\gamma)\le q^Y}} \left|\Sigma\left(X,\frac{\gamma r}{f}\right)\right|.
$$
We prove the following.

\begin{Lemma} \label{Ubound}
Suppose that $X\ge \ell$, $Y\ge 0$, $r,f\in \mathbb{A}$, $\mathbf{N}(f)=q^Q$ and $(r,f)=1$. Then
$$
U(X,Y;f,r)\ll_{q,\ell} q^Q + q^Y.
$$
\end{Lemma}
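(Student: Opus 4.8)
The plan is to use the pointwise bound on $\Sigma(X,\cdot)$ furnished by Lemma \ref{linexpsum} to replace the exponential sum by an indicator, reducing the whole estimate to a lattice-point count, and then to carry out that count using the non-archimedean geometry of $\mathbb{A}$.

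First I would apply Lemma \ref{linexpsum} to each term $\Sigma\left(X,\gamma r/f\right)$. Since $X\ge \ell$, the lemma says this sum vanishes unless $\mathbf{N}_{\mathbf{T}}\left(\gamma r/f\right)\le q^{\ell-X-1}$, and that in the surviving case $\left|\Sigma\left(X,\gamma r/f\right)\right|\le q^{X+2}$. Hence
\[
U(X,Y;f,r)\le q^{X+2}\cdot \#G,\qquad
G:=\left\{\gamma\in\mathbb{A}\setminus\{0\}\ :\ \mathbf{N}(\gamma)\le q^{Y},\ \mathbf{N}_{\mathbf{T}}\!\left(\frac{\gamma r}{f}\right)\le q^{\ell-X-1}\right\},
\]
so everything reduces to bounding $\#G$.

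Next I would translate the torus condition into a congruence with a size constraint. Because $\mathbf{N}$ is multiplicative and $\mathbf{N}(f)=q^{Q}$, one has $\mathbf{N}_{\mathbf{T}}\!\left(\gamma r/f\right)=q^{-Q}\min_{\beta\in\mathbb{A}}\mathbf{N}\left(\gamma r-\beta f\right)$, so the defining condition on $G$ is equivalent to the assertion that the residue class $\gamma r \bmod f$ has a representative $\delta\in\mathbb{A}$ with $\mathbf{N}(\delta)\le q^{B}$, where $B:=Q+\ell-X-1$ (which is $\ge 0$ in the range of interest, e.g.\ $X=Q$). Since $(r,f)=1$, the class of $\gamma$ modulo $f$ is determined by that of $\delta$; therefore the elements of $G$ lie in at most $C:=\#\{\delta\in\mathbb{A}:\mathbf{N}(\delta)\le q^{B}\}$ residue classes modulo $f$, and counting $\delta=a+b\sqrt{\alpha}$ with $\deg(a^{2}-b^{2}\alpha)\le B$ gives $C\ll_{q,\ell}q^{B}$.

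It then remains to count, inside a fixed residue class, the $\gamma$ with $\mathbf{N}(\gamma)\le q^{Y}$. Here I would use the ultrametric property of $|\cdot|_\infty$: any two distinct members of one class differ by a nonzero multiple of $f$, which has $|\cdot|_\infty\ge q^{Q/2}$, so a norm-ball of radius $q^{Y/2}$ meets each class in $\ll_{q,\ell}1+q^{Y-Q}$ points (at most one point when $Y<Q$, and at most $\#\{\nu:\mathbf{N}(\nu)\le q^{Y-Q}\}\ll_{q,\ell}q^{Y-Q}$ points when $Y\ge Q$). Combining, $\#G\ll_{q,\ell}q^{B}\bigl(1+q^{Y-Q}\bigr)$, whence
\[
U(X,Y;f,r)\le q^{X+2}\,\#G\ll_{q,\ell}q^{X+2}\,q^{Q+\ell-X-1}\bigl(1+q^{Y-Q}\bigr)=q^{Q+\ell+1}+q^{Y+\ell+1}\ll_{q,\ell}q^{Q}+q^{Y}.
\]
Crucially the dependence on $X$ cancels between the factor $q^{X+2}$ from Lemma \ref{linexpsum} and the factor $q^{B}=q^{Q+\ell-X-1}$ from the count of residue classes. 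The main obstacle is exactly this pair of function-field geometry-of-numbers estimates, the per-class count $\ll_{q,\ell}1+q^{Y-Q}$ and the class count $C\ll_{q,\ell}q^{B}$: getting these right is what forces the two surviving terms to balance as $q^{Q}$ and $q^{Y}$.
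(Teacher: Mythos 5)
Your argument is essentially identical to the paper's own proof: the paper likewise applies Lemma~\ref{linexpsum} to replace $U(X,Y;f,r)$ by $q^{X+2}$ times a count of admissible $\gamma$, and then performs exactly your two-step count, written there as a double sum over representatives $s\in\mathbb{A}\setminus\{0\}$ with $\mathbf{N}(s)\le q^{Q+\ell-X-1}$ and over $\gamma\equiv s\overline{r}\bmod{f}$ with $\mathbf{N}(\gamma)\le q^{Y}$, which is precisely your (number of classes)\,$\times$\,(points per class) bound $\ll_{q,\ell}q^{Q+\ell-X-1}\left(1+q^{Y-Q}\right)$.

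One point deserves attention: your parenthetical hedge that $B=Q+\ell-X-1\ge 0$ ``in the range of interest''. Your class count $C\ll_{q,\ell}q^{B}$ genuinely requires $B\ge 0$; when $B<0$ only $\delta=0$ survives, so $C=1$ and nothing cancels the factor $q^{X+2}$. This is not a defect you could have repaired, because the lemma as literally stated is false for $X$ much larger than $Q$: if $Y\ge Q$, then $\gamma=f$ belongs to the counted set, and $\Sigma\left(X,\frac{fr}{f}\right)=\Sigma(X,r)$ has every term equal to $1$, since for $r,x\in\mathbb{A}$ one has $\mathbf{Tr}\left(\frac{rx}{2\sqrt{\alpha}}\right)\in A$ and $e$ is trivial on $A$; hence $U(X,Y;f,r)\ge q^{X+1-\ell}$, which exceeds any constant times $q^{Q}+q^{Y}$ once $X$ is large. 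The paper's proof masks this by summing only over $s\ne 0$, i.e.\ by silently discarding exactly the residue class $\gamma\equiv 0\bmod{f}$, which is the class your argument retains. So both proofs really establish the lemma only for $X\le Q+\ell-1$; since the paper invokes it only with $X=Q$ and $X=Q/2$, this suffices for all applications, and your version, which keeps the zero class and records the needed hypothesis, is the more scrupulous of the two.
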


\begin{proof}
By Lemma \ref{linexpsum}, we have 
$$
U(X,Y;f,r)= q^{X+2} \cdot \sharp\left\{\gamma\in \mathbb{A}\setminus \{0\} \ :\  \mathbf{N}(\gamma)\le q^Y, \ 
\mathbf{N}_{\mathbf{T}}\left(\frac{\gamma r}{f}\right)\le q^{\ell-X-1}\right\}.
$$
It follows that
\begin{equation*}
 U(X,Y;f,r)\le q^{X+2} \sum\limits_{\substack{s \in \mathbb{A}\setminus \{0\}\\ \mathbf{N}(s)\le q^{Q+\ell-X-1}}} 
 \sum\limits_{\substack{\gamma\in \mathbb{A} \\ \mathbf{N}(\gamma)\le q^Y\\ \gamma \equiv s\overline{r} \bmod{f}}} 1\\
 \ll_{\ell} q^Q\left(1+q^{Y-Q}\right)= q^Q + q^Y.
\end{equation*}
\end{proof}

Now assume that $\mathcal{S}=\mathbb{A}\setminus\{0)$. Then combining \eqref{sKnewbound} and Lemma \ref{Ubound}, we obtain
\begin{equation} \label{sKfullbound}
 \sK \ll_{q,\ell} 1+q^{2Q-N} \quad \mbox{if } M\ge 0.
\end{equation}
Further, combining \eqref{Tbound}, \eqref{M<0} and \eqref{sKfullbound} proves Theorem \ref{fullmodqe}. 

\section{Square moduli}
We now turn to the case of square moduli, i.e. the situation when
$$
\mathcal{S}:=\{f^2\ :\ f\in \mathbb{A}\setminus \{0\}\},
$$
and $\mbox{char}(K)=p>2$. Then we may write
\begin{equation} \label{specsquare}
\sum\limits_{f_2\in \mathcal{S}(\le Q)} 
e\left({\bf Tr}\left(\frac{jr_1f_2}{2f_1\sqrt{\alpha}}\right)\right) = 
2 \sum\limits_{\substack{f\in \mathbb{A}\\ 1\le \mathbf{N}(f)\le q^{Q/2}}} 
e\left({\bf Tr}\left(\frac{jr_1f^2}{2f_1\sqrt{\alpha}}\right)\right).
\end{equation}
To bound the above expression, we perform a Weyl shift. We bound the modulus square by 
\begin{equation} \label{weyl}
\begin{split}
& \Bigg|\sum\limits_{\substack{f\in \mathbb{A}\\ 1\le \mathbf{N}(f)\le q^{Q/2}}} 
e\left({\bf Tr}\left(\frac{jr_1f^2}{2f_1\sqrt{\alpha}}\right)\right) \Bigg|^2\\ 
= & \sum\limits_{\substack{f,\tilde{f}\in \mathbb{A}\\ 1\le \mathbf{N}(f)\le q^{Q/2}\\ 1\le \mathbf{N}(\tilde{f})\le q^{Q/2}}}
e\left({\bf Tr}\left(\frac{jr_1(\tilde{f}-f)(\tilde{f}+f)}{2f_1\sqrt{\alpha}}\right)\right)\\
= & \sum\limits_{\substack{f\in \mathbb{A}\\ 1\le \mathbf{N}(f)\le q^{Q/2}}} 1 +
\sum\limits_{\substack{f,h\in \mathbb{A}\\ 1\le \mathbf{N}(f)\le q^{Q/2}\\ 1\le \mathbf{N}(h)\le q^{Q/2}}}
e\left({\bf Tr}\left(\frac{jr_1h(h+2f)}{2f_1\sqrt{\alpha}}\right)\right)\\
\ll & q^{Q/2}+\sum\limits_{\substack{h\in \mathbb{A}\\ 1\le \mathbf{N}(h)\le q^{Q/2}}} \Bigg| 
\sum\limits_{\substack{f\in \mathbb{A}\\ 1\le \mathbf{N}(f)\le q^{Q/2}}}
e\left({\bf Tr}\left(\frac{jh(2r_1)f}{2f_1\sqrt{\alpha}}\right)\right) \Bigg|.
\end{split}
\end{equation}

Now assume $M\ge 0$. Then from \eqref{sKnewbound} and \eqref{specsquare}, we deduce that
\begin{equation*} 
\begin{split}
\sK 
\ll_{q,\ell} & q^{3Q/2-N}+ q^{Q+\ell-(N+2)}\max\limits_{\substack{r_1 \in \mathbb{A}, \ f_1 \in \mathcal{S}(Q) \\ (r_1,f_1)=1} }\
\sum_{\substack{j\in \mathbb{A}\\ 1 \le \mathbf{N}(j) \le q^{N+1-Q}}} \left| 
\sum\limits_{\substack{f\in \mathbb{A}\\ 1\le \mathbf{N}(f)\le q^{Q/2}}} 
e\left({\bf Tr}\left(\frac{jr_1f^2}{2f_1\sqrt{\alpha}}\right)\right)\right|.
\end{split}
\end{equation*}
where the term $q^{3Q/2-N}$ comes from the contribution of $j=0$. 
Further, applying the Cauchy-Schwarz inequality, and using \eqref{specsquare} and \eqref{weyl}, we obtain
\begin{equation*}
\begin{split}
\sK \ll_{q,\ell} &\ q^{3Q/2-N}+q^{Q+\ell-(N+2)}\max\limits_{\substack{r_1 \in \mathbb{A}, \ f_1 \in \mathcal{S}(Q) \\ (r_1,f_1)=1} }\
\Bigg(\sum_{\substack{j\in \mathbb{A}\\ 1\le \mathbf{N}(j) \le q^{N+1-Q}}} 1 \Bigg)^{1/2}\times\\ &
\left(\sum_{\substack{j\in \mathbb{A}\\ 1 \le \mathbf{N}(j) \le q^{N+1-Q}}} \left| 
\sum\limits_{\substack{f\in \mathbb{A}\\ 1\le \mathbf{N}(f)\le q^{Q/2}}} 
e\left({\bf Tr}\left(\frac{jr_1f^2}{2f_1\sqrt{\alpha}}\right)\right)\right|^2\right)^{1/2}\\ 
\ll_{q,\ell} &\ q^{3Q/2-N}+q^{(Q-N)/2} \max\limits_{\substack{r_1 \in \mathbb{A}, \ f_1 \in \mathcal{S}(Q) \\ (r_1,f_1)=1} }\
\Bigg(\sum_{\substack{j\in \mathbb{A}\\ 1\le \mathbf{N}(j) \le q^{N+1-Q}}} \Bigg(
q^{Q/2}+\\ & \sum\limits_{\substack{h\in \mathbb{A}\\ 1\le \mathbf{N}(h)\le q^{Q/2}}} \Bigg| 
\sum\limits_{\substack{f\in \mathbb{A}\\ 1\le \mathbf{N}(f)\le q^{Q/2}}}
e\left({\bf Tr}\left(\frac{jh(2r_1)f}{2f_1\sqrt{\alpha}}\right)\right)\Bigg|  \Bigg)\Bigg)^{1/2},
\end{split}
\end{equation*}
which implies
\begin{equation*}
\begin{split}
\sK \ll & q^{3Q/2-N}+q^{Q/4} + q^{(Q-N)/2}\max\limits_{\substack{r_1 \in \mathbb{A}, \ f_1 \in \mathcal{S}(Q) \\ (r_1,f_1)=1} }\\ &
\Bigg(\sum_{\substack{\ell\in \mathbb{A}\\ 1\le \mathbf{N}(\ell) \le q^{N+1-Q/2}}} \tau(\ell)
\Bigg| 
\sum\limits_{\substack{f\in \mathbb{A}\\ 1\le \mathbf{N}(f)\le q^{Q/2}}}
e\left({\bf Tr}\left(\frac{\ell(2r_1)f}{2f_1\sqrt{\alpha}}\right)\right)\Bigg|  \Bigg)^{1/2},
\end{split}
\end{equation*}
where $\tau(\ell)$ is the number of divisors of $\ell$ in $\mathbb{A}$. We note that
$$
\tau(\ell)\ll_q 2^{\mathbf{D}(\ell)}
$$
since $\ell$ factors into at most $\mathbf{D}(\ell)$ irreducible polynomials, each unique up to units, and each divisor of $\ell$ 
is associated to a product of a subset of these polynomials. 
Using this bound and Lemma \ref{Ubound}, we deduce that
\begin{equation} \label{sKsquarebound}
\begin{split}
\sK \le_{q,\ell} & q^{3Q/2-N} + q^{Q/4} + q^{(Q-N)/2}\left(2^{N-Q/2}\left(q^{N-Q/2}+q^Q\right)\right)^{1/2}
\\ \ll & q^{3Q/2-N}+2^{N/2-Q/4}\left(q^{Q/4}+q^{Q-N/2}\right) \quad \mbox{if } M\ge 0.
\end{split}
\end{equation}

Combining \eqref{Tbound}, \eqref{M<0} and \eqref{sKsquarebound}, we obtain
\begin{equation*}
\begin{split}
& \sum\limits_{\substack{f\in \mathbb{A}\setminus\{0\}\\ \mathbf{N}(f)= q^Q\\ f=\Box}} \ \sum\limits_{\substack{r \bmod{f}\\ (r,f)=1}} 
\left|\sum \limits_{g \in \mathbf{B}(0, N)\cap \mathbb{A}}  a_g \cdot e\left(\mathbf{Tr}\left(\frac{gr}{2\sqrt{\alpha}f}\right)\right)\right|^2 
\\ \ll & \left(q^{3Q/2}+q^N+2^{N/2-Q/4}\left(q^{Q/4+N}+q^{Q+N/2}\right)\right) \sum \limits_{g \in \mathbf{B}(0, N)\cap \mathbb{A}}   |a_g|^2,
\end{split}
\end{equation*}
where $f=\Box$ indicates that $f$ is a square in $\mathbb{A}$. This implies Theorem \ref{squaremodqe} upon changing $Q$ into $2Q$.


\begin{thebibliography}{3}
\bibitem{BSi1} S. Baier; R.K. Singh, {\it Erratum for the paper ``Large sieve inequality with power moduli for function fields''}, submitted to J. Number Theory.
\bibitem{BSi2} S. Baier; R.K. Singh, {\it Large sieve inequality with power moduli for function fields}, J. Number Theory 196 (2019), 1--13.
\bibitem{BSi3} S. Baier; R.K. Singh, {\it The large sieve for square moduli in function fields}, preprint, arXiv:1910.05043.
\bibitem{BBa} S. Baier; A. Bansal, {\it The large sieve with power moduli for $\mathbb{Z}[i]$}, to appear in Int. J. Number Theory.  
\bibitem{Hsu} C.-N. Hsu, {\it A large sieve inequality for rational function fields}, J. Number Theory 58 (1996), no. 2, 267--287. 
\bibitem{Zh1} L. Zhao, {\it Large sieve inequality with characters to square moduli}, Acta Arith. 112 (2004), no. 3, 297--308.
\bibitem{Ros} M. Rosen, {\it Number theory in function fields}, Graduate Texts in Mathematics, 210. Springer-Verlag, New York, 2002. 
\end{thebibliography}
\end{document}